\newtheorem{theorem}{Theorem}%[section]
\newtheorem{lemma}[theorem]{Lemma}
\theoremstyle{definition}
\newtheorem{example}{Example}%[section]
\newtheorem{remark}{Remark}%[section]
\newtheorem*{case}{Case}%[section]
\newtheorem*{step}{Step}%[section]
\newcommand{\Aut}{{\mathrm{Aut}}}
\newcommand{\Char}{\hspace{4pt}\mathrm{char}\hspace{4pt}}
\newcommand{\la}{\langle}
\newcommand{\ra}{\rangle}
\begin{document}
%\linenumbers
\title{Regular dessins uniquely determined by a nilpotent automorphism group}
\author[1,2]{Na-Er Wang\thanks{wangnaer@zjou.edu.cn}}
\author[3,4]{Roman Nedela\thanks{nedela@savbb.sk}}
\author[1,2]{Kan Hu\thanks{hukan@zjou.edu.cn}}
\affil[1]{School of Mathematics, Physics and Information Science, Zhejiang Ocean University, Zhoushan, Zhejiang 316022, People's Republic of China}
\affil[2]{Key Laboratory of Oceanographic Big Data Mining \& Application of Zhejiang Province, Zhoushan, Zhejiang 316022, People's Republic of China}
\affil[3]{University of West Bohemia, NTIS FAV, Pilsen, Czech Republic}
\affil[4]{Mathematical Institute, Slovak Academy of Sciences, Bansk\'a Bystrica, Slovak Republic}
\date{}
\maketitle

\begin{abstract}
It is well known that the automorphism group of a regular dessin is a two-generator finite group, and the isomorphism classes of regular dessins with automorphism groups isomorphic to a given finite group $G$ are in one-to-one correspondence with the orbits of the action of $\Aut(G)$ on the ordered generating pairs of $G$.  If there is only one orbit, then up to isomorphism the regular dessin is uniquely determined by the group $G$ and it is called uniquely regular. In the paper we investigate the classification of uniquely regular dessins with a nilpotent automorphism group. The problem is reduced to the classification of finite maximally automorphic $p$-groups $G$, i.e., the order of the automorphism group of $G$ attains Hall's upper bound. Maximally automorphic $p$-groups of nilpotency class three are classified.
\\[2mm]
\noindent{\bf Keywords:}~group automorphism, Hall's upper bound, regular dessin\\
\noindent{\bf MSC(2010)}20D15, 20B35,  58D19,
\end{abstract}

%%%%%%%%%%%%% Introduction     %%%%%%%%%%%%%%%%
\section{Introduction}
A \textit{dessin} is a 2-cell embedding of a connected $2$-coloured bipartite graph into a compact
Riemann surface. An automorphism of a dessin is a colour-preserving automorphism of the underlying graph,
regarded as permutations of the edges, which extends to a conformal self-homeomorphism of the supporting
surface. It is well known that the group of automorphisms of a dessin acts semi-regularly on the edges. In the
case where this action is transitive, and hence regular, the dessin will be called regular as well.

Belyi's theorem \cite{Be79} establishes a correspondence between dessins and Riemann surfaces definable
over the field of algebraic numbers, and thus, as observed by Grothendieck~\cite{Gro1997}, develops
a combinatorial approach to the absolute Galois group through its action on dessins. It is recently proved by
Gonz\'alez-Diez and Jaikin-Zapirain that this action remains faithful when restricted to regular dessins~\cite{GJ2013}.
Therefore it is important to investigate regular dessins and the associated quasiplatonic Riemann surfaces,
see~\cite{Jones2013, Jones2012, JS1996,JW2016} and references therein for more details on this subject.

Our ambition is to investigate regular dessins with a maximal group of external symmetries.
In order to explain our motivation we need to introduce a group-theoretical approach to regular dessins.
It is well known that the automorphism group of a regular dessin is a two-generator finite group,
and for a given two-generator finite group $G$, the isomorphism classes of regular dessins with automorphism group isomorphic to $G$ are
in one-to-one correspondence with the orbits of the action of $\Aut(G)$ on the generating pairs of $G$~\cite{Jones2013}.
In the case where $\Aut(G)$ has just one orbit the corresponding dessin will be called
{\it uniquely regular}.

The simplest example of a uniquely regular dessin is determined by the group $\mathrm{C}_n\times \mathrm{C}_n$,
the direct product of two cyclic groups of order $n$. Note that the associated algebraic curve is the well known
Fermat curve defined by the equation $x^n+y^n=z^n$, see \cite{JS1996} for details. It follows from the
uniqueness that uniquely regular dessins are invariant under the action of the group of dessin operations,
and hence possess the highest level of external symmetry. The uniqueness also implies that all uniquely
regular dessins are invariant under the action of the absolute Galois group, and hence the associated
algebraic curves can be defined over the field of rational numbers.

The classification problem of uniquely regular dessins translates to the language of group theory as follows:

\medskip
{\bf Problem 1:} Classify finite two-generator groups $G$ such that $\Aut(G)$ is transitive on the
set of ordered generating pairs of $G$.
\medskip

Though every finite non-abelian simple group $G$ is two-generated~\cite{MSW1994}, it cannot determine a
uniquely regular dessin, since the generating pair of $G$ can be chosen as an involution and a non-involution,
and $G$ does not admit an automorphism transposing them. In this paper we restrict our investigation on Problem 1
to finite nilpotent groups. More precisely, we consider

\medskip
{\bf Problem 2A:} Classify finite two-generator nilpotent groups $G$ such that $\Aut(G)$ is transitive
on the set of ordered generating pairs of $G$.
\medskip

Since every finite nilpotent group is a direct product of its Sylow subgroups, Problem 2A reduces to

\medskip
{\bf Problem 2B:} Classify finite two-generator $p$-groups $G$ such that $\Aut(G)$ is
transitive on the set of ordered generating pairs of $G$.
\medskip

A  solution to Problem 2B (and hence to Problem 2A) is surprisingly nice. Let $G$ be a $d$-generator
$p$-group of order $p^n$, P. Hall showed in~\cite{Hall1933} that
\[
|\Aut(G)|\leq p^{d(n-d)}(p^d-1)(p^d-p)\cdots (p^d-p^{d-1}).
\]
The group $G$ will be called {\it maximally automorphic} if the equality holds. In Theorem~\ref{DESSIN}
we prove that a two-generator $p$-group determines a uniquely regular dessin if and only if it is maximally
automorphic. Although this gives a general answer to Problems~2A and 2B a complete description
of maximally automorphic $p$-groups is not at hand.

Two-generator abelian maximally automorphic $p$-groups are
exactly the homocyclic $p$-groups isomorphic to $\mathrm{C}_{p^n}\times \mathrm{C}_{p^n}$ for some integer $n\geq1$.
Two-generator maximally automorphic $p$-groups of nilpotency class two have been classified in~\cite[Theorem~5]{HNW2015}. The main result of this paper is the following theorem classifying two-generator maximally automorphic $p$-groups of nilpotency class three:
\begin{theorem}\label{CLASSTHREE}
Let $G=\langle x,y\rangle$ be a maximally automorphic $p$-group of nilpotency class three, then $G$ is isomorphic to one of the groups listed below:
\begin{enumerate}
\item[\rm(i)] $p=3$~and~$1\leq c<b= a$ or $1\leq c\leq b\leq a-1$,
\begin{align*}G=\la x,y|&x^{3^a}=y^{3^a}=z^{3^b}=u^{3^c}=v^{3^c}=[x,u]=[x,v]=[y,u]=[y,v]=1, \\
&z=[x,y], u=[z,x], v=[z,y]\ra.
\end{align*}
\item[\rm(ii)] $p>3$~and~$1\leq c\leq b\leq a$,
\begin{align*}G=\la x,y|&x^{p^a}=y^{p^a}=z^{p^b}=u^{p^c}=v^{p^c}=[x,u]=[x,v]=[y,u]=[y,v]=1,\\ &z=[x,y], u=[z,x], v=[z,y]\ra.
\end{align*}
\item[\rm(iii)] $p=2$~and~$1\leq c\leq b\leq a-1,$
\begin{align*}G=\la x,y|&x^{2^a}=y^{2^a}=z^{2^{b}}=u^{2^c}=v^{2^c}=[x,u]=[x,v]=[y,u]=[y,v]=1, \\ &z=[x,y], u=[z,x], v=[z,y]\ra.
\end{align*}
\item[\rm(iv)]$p=2$~and~$1\leq c\leq b\leq a-1,$
\begin{align*}G=\la x,y|&z^{2^{b}}=u^{2^c}=v^{2^c}=[x,u]=[x,v]=[y,u]=[y,v]=1,\\ &x^{2^a}=u^{2^{c-1}},y^{2^a}=v^{2^{c-1}}, z=[x,y], u=[z,x], v=[z,y]\ra.\end{align*}
\item[\rm(v)] $p=2$~and~$1\leq c\leq a-2,$
\begin{align*}
G=\la x,y|&z^{2^{a-1}}=u^{2^c}=v^{2^c}=[x,u]=[x,v]=[y,u]=[y,v]=1,\\
&x^{2^{a-1}}=z^{2^{a-2}},  y^{2^{a-1}}=z^{2^{a-2}},z=[x,y], u=[z,x], v=[z,y]\ra.
 \end{align*}
\item[\rm(vi)] $p=2$~and~$1\leq c\leq a-2,$
\begin{align*}
G=\la x,y|&z^{2^{a-1}}=u^{2^c}=v^{2^c}=[x,u]=[x,v]=[y,u]=[y,v]=1, \\
 &x^{2^{a-1}}=z^{2^{a-2}}u^{2^{c-1}}, y^{2^{a-1}}=z^{2^{a-2}}v^{2^{c-1}},z=[x,y], u=[z,x], v=[z,y]\ra.
 \end{align*}
\end{enumerate}
Moreover, the above groups are pairwise non-isomorphic.
\end{theorem}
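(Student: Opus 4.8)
The plan is to first reduce a maximally automorphic class‑three group to a rigid normal form, then to determine which power relations are compatible with maximality, then to verify sufficiency family by family, and finally to separate the groups by invariants.

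\textbf{Step 1: a normal form.} Let $G=\la x,y\ra$ be maximally automorphic of class three, $|G|=p^{n}$, and put $z=[x,y]$, $u=[z,x]$, $v=[z,y]$. Since $G$ is $2$-generated of class three, $\gamma_{3}(G)=[\gamma_{2}(G),G]$ lies in $Z(G)$ and is generated by $u,v$, while $\gamma_{2}(G)/\gamma_{3}(G)$ is cyclic on the image of $z$. Writing $\varrho\colon\Aut(G)\to\mathrm{GL}(2,p)$ for the action on $G/\Phi(G)$, one has $|\Aut(G)|=|\ker\varrho|\cdot|\mathrm{im}\,\varrho|$, so by Hall's inequality $G$ is maximally automorphic precisely when $\mathrm{im}\,\varrho=\mathrm{GL}(2,p)$ and $|\ker\varrho|=p^{2(n-2)}$. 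From surjectivity of $\varrho$ I will derive the shape of the central factors: an automorphism inducing $A\in\mathrm{GL}(2,p)$ acts on $G/\gamma_{2}$ as $A$, on $\gamma_{2}/\gamma_{3}$ as the scalar $\det A$, and on $\gamma_{3}/\Phi(\gamma_{3})$ as $(\det A)A$; since the matrices $(\det A)A$ contain $\mathrm{SL}(2,p)$, which is transitive on lines, $\Aut(G/\gamma_{2})$ and $\Aut(\gamma_{3})$ cannot fix a line of their Frattini quotients, forcing $G/\gamma_{2}\cong C_{p^{a}}\times C_{p^{a}}$, $\gamma_{2}/\gamma_{3}\cong C_{p^{b}}$ and $\gamma_{3}\cong C_{p^{c}}\times C_{p^{c}}$ for some $a,b,c\ge1$. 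The collection identity $[x^{m},y]\equiv z^{m}u^{\binom{m}{2}}\pmod{\gamma_{4}}$ together with the orders of $u,v$ yields $c\le b\le a$ and that the $p^{a}$-th powers of $x,y$ fall into $\gamma_{3}$. Hence $G$ is a quotient of the free nilpotent group $F=F_{2}/\gamma_{4}(F_{2})$ of class three on two generators, by a relator subgroup recording, besides the evident power relations, only the ``defect'' elements $x^{p^{a}},y^{p^{a}}\in\gamma_{3}$ and $z^{p^{b}}\in\gamma_{3}$.

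\textbf{Step 2: which defects survive maximality.} The equality $|\ker\varrho|=p^{2(n-2)}$ means, layer by layer along the lower $p$-central series, that the image of $\ker\varrho$ in $\mathrm{Hom}\bigl(G/\Phi(G),\,\Phi_{i}(G)/\Phi_{i+1}(G)\bigr)$ is the whole group; equivalently, every candidate map $x\mapsto xw_{x},\ y\mapsto yw_{y}$ with $w_{x},w_{y}\in\Phi(G)$ that is consistent with the relations must actually be an automorphism. Imposing this, and mere consistency of the relations, forces congruences on the defect triple $(x^{p^{a}},y^{p^{a}},z^{p^{b}})$ obtained by expanding $(xw_{x})^{p^{a}}$ and $[xw_{x},yw_{y}]^{p^{b}}$ via the Hall--Petrescu formula; the governing quantities are the $p$-adic valuations of $\binom{p^{a}}{2},\binom{p^{a}}{3}$ and $\binom{p^{b}}{2},\binom{p^{b}}{3}$. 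For $p>3$ these valuations are large enough to kill every defect, producing family~(ii). For $p=3$ the valuation of $\binom{3^{a}}{3}$ is only $a-1$, which excludes the configuration $b=a=c$ but permits $b=a>c$ with all powers trivial, giving family~(i). For $p=2$ the valuation of $\binom{2^{a}}{2}$ is only $a-1$, which on the one hand excludes $b=a$ with trivial defects, and on the other makes compatible---and compatible with maximality---exactly the $\mathrm{GL}(2,p)$-equivariant nonzero defect patterns $x^{2^{a}}=u^{2^{c-1}},\,y^{2^{a}}=v^{2^{c-1}}$ (defects in $\gamma_{3}$, giving family~(iv)) and $x^{2^{a-1}}=z^{2^{a-2}}(u^{2^{c-1}}),\,y^{2^{a-1}}=z^{2^{a-2}}(v^{2^{c-1}})$ (defects in $\gamma_{2}\setminus\gamma_{3}$, whence $b=a-1$, giving families~(v) and (vi)). Equivariance of the defect data under the swap $x\leftrightarrow y$ and the dilations $x\mapsto x^{k}$ is what forces the two generators to carry matched defects and discards asymmetric or partial choices. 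Completing this case analysis---especially checking that no further $2$-adic defect vectors survive---is the longest and most delicate part, and I expect it to be the main obstacle.

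\textbf{Step 3: sufficiency.} For each group $G$ in the list I will verify maximal automorphy; since Hall's bound already gives $|\Aut(G)|\le p^{2(n-2)}|\mathrm{GL}(2,p)|$, it is enough to exhibit that many automorphisms. Surjectivity of $\varrho$ follows by checking that $(x,y)\mapsto(y,x)$, $(x,y)\mapsto(xy,y)$ and $(x,y)\mapsto(x^{k},y)$ with $k$ a primitive root mod $p$ (together with the induced substitutions for $z,u,v$) preserve every defining relation---uniformly, thanks to the symmetry of the presentations---and that the corresponding matrices generate $\mathrm{GL}(2,p)$. Then one exhibits $p^{2(n-2)}$ elements of $\ker\varrho$ as maps $x\mapsto xw_{x},\ y\mapsto yw_{y}$ with $(w_{x},w_{y})$ ranging over a transversal of the consistency congruences of Step~2 inside $\Phi(G)\times\Phi(G)$, counts exactly $p^{2(n-2)}$ admissible pairs, and notes that each such endomorphism is onto and therefore an automorphism as $G$ is finite. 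Multiplying, $|\Aut(G)|\ge p^{2(n-2)}|\mathrm{GL}(2,p)|$, hence equality. Finally, the groups are separated by characteristic-quotient invariants: within (i)--(iii) the triple $(a,b,c)$ is recovered from $G/\gamma_{2}\cong C_{p^{a}}\times C_{p^{a}}$, $\gamma_{2}/\gamma_{3}\cong C_{p^{b}}$, $\gamma_{3}\cong C_{p^{c}}\times C_{p^{c}}$, and the prime $p$ tells the three families apart; for $p=2$ one further distinguishes (iii)--(vi) by the exponent via $\mho_{a}(G)$ and by $\mho_{a-1}(G)\cap\gamma_{3}(G)$---in (iii) one has $\mho_{a}(G)=1$; in (iv), $\mho_{a}(G)\ne1$ and $x^{2^{a}}\in\gamma_{3}\setminus\{1\}$; in (v) and (vi) the value $b=a-1$ is attained with $x^{2^{a-1}}\in\gamma_{2}\setminus\gamma_{3}$, and $\mho_{a-1}(G)\cap\gamma_{3}(G)$ is trivial for (v) but nontrivial for (vi). Since the admissible parameter ranges stated in the theorem are pairwise disjoint under these invariants, the six families consist of pairwise non-isomorphic groups.
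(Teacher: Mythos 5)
Your overall strategy is plausible and in places genuinely different from the paper's: you derive the homocyclic structure of $G/\gamma_2$ and $\gamma_3$ from $GL(2,p)$-equivariance and transitivity of $SL(2,p)$ on lines (the paper instead computes with three explicit automorphisms $\tau,\pi,\eta$ and proves $\gamma_3\cong \mathrm{C}_{p^c}^2$ directly), and you separate families (v) and (vi) by the characteristic subgroup $\mho_{a-1}(G)\cap\gamma_3(G)$ rather than by the paper's normalized-isomorphism argument. But there is a genuine gap at the centre of the argument: your Step 2, which is supposed to determine exactly which ``defect'' triples $(x^{p^a},y^{p^a},z^{p^b})$ are compatible with maximal automorphy, is only described, not executed --- you yourself flag it as ``the main obstacle.'' That case analysis \emph{is} the theorem: it is where the congruences $\binom{p^a}{2}\equiv 0 \pmod{p^b}$, $\binom{p^a}{3}\equiv 0\pmod{p^c}$, $2r\equiv 0\pmod{p^c}$, etc.\ come from, where $b=a$ is excluded for $p=2$ and $b=a=c$ for $p=3$, and where one checks that no asymmetric or further $2$-adic defect patterns survive. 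The paper carries this out concretely by first invoking its Lemma on characteristic quotients to conclude that $G/G_3$ is maximally automorphic of class two, hence one of the three previously classified groups; this reduction, which you do not use, is what splits the analysis into two manageable cases and supplies the starting relations $z^{p^b}=u^iv^j$, $x^{p^a}=u^rv^s$ (resp.\ $x^{2^{a-1}}=z^{2^{a-2}}u^rv^s$) to which the automorphisms are then applied.

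Two further points need repair. First, your Step 1 asserts that the $p^a$-th powers of $x,y$ fall into $\gamma_3$; this is false for families (v) and (vi), where $x^{2^{a-1}}=z^{2^{a-2}}(\cdots)$ lies in $\gamma_2\setminus\gamma_3$ (your Step 2 tacitly contradicts Step 1 here), so the ``normal form'' is not correctly pinned down before the case analysis begins. Second, your Step 3 counts $p^{2(n-2)}$ admissible pairs $(w_x,w_y)$ and concludes maximal automorphy, but this presupposes that each listed presentation defines a group of the intended order $p^{2(a+c)+b}$, i.e.\ that the presentation does not collapse and the generators have the stated orders. The paper establishes this by building each group as an iterated cyclic extension (via Isaacs' criterion for cyclic extensions); some such existence argument must be supplied before the counting in your Step 3 is meaningful.
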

In proving the result we employ the theory of group extensions together with some results on metabelian groups.

%%%%%%%%%%%%%%%%%%%%%%%%%%%%%%%%%%%%%%%%
\section{Maximally automorphic $p$-groups}
Let $G$ be a $d$-generator finite $p$-group of order $p^n$, Hall~\cite{Hall1933} has shown that $|\Aut(G)|$ divides $U(p;n,d)$ where
\[
U(p;n,d)=p^{d(n-d)}(p^d-1)(p^d-p)\cdots (p^d-p^{d-1}).
\]
The group $G$ is called \textit{maximally automorphic} if $|\Aut(G)|=U(p;n,d)$. Berkovich and Janko posed the problem  of studying maximally automorphic $p$-groups in \cite[Research problems and themes I 35(a)]{BJ2008}. Clearly finite  homocyclic $p$-groups $\mathrm{C}_{p^e}^d$ are examples of maximally automorphic $p$-groups.
\begin{example}The quaternion group $\mathrm{Q}_8$ is a non-abelian two-generator $2$-group of
order $2^3$. Since $|\Aut(\mathrm{Q}_8)|=24=U(2;3,2)$, $\mathrm{Q}_8$ is maximally automorphic.
It determines a unique regular dessin, which is the embedding of the doubled $4$-cycle $\mathrm{C}_4^{(2)}$,
the cycle of length $4$ with multiplicity $2$, into the double torus of genus 2. This is depicted in Fig.~\ref{C4(2)},
where the opposite sides of the outer octagon are identified to form the double torus.
\begin{figure}[h!]
  \centering
    \includegraphics[width=0.4\textwidth]{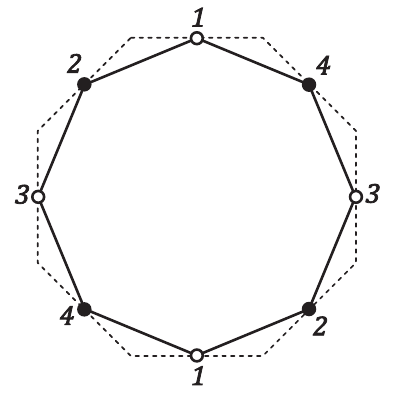}
    \caption{$\mathrm{C}_4^{(2)}$ embedded into double torus}\label{C4(2)}
\end{figure}
\end{example}

\begin{lemma}\label{TRANSITIVE}
A $d$-generator finite $p$-group $G$ of order $p^n$ is maximally automorphic if and only if $\Aut(G)$ is transitive (or equivalently, regular) on the generating $d$-tuples of $G$.
\end{lemma}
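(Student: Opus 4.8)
The plan is to count the generating $d$-tuples of $G$ explicitly, observe that $\Aut(G)$ acts freely on them, and conclude that Hall's number $U(p;n,d)$ is exactly this count; equality in Hall's bound then becomes equivalent to transitivity of the action.

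First I would invoke the Burnside basis theorem for $p$-groups: a $d$-tuple $(g_1,\dots,g_d)$ generates $G$ if and only if the images $\bar g_1,\dots,\bar g_d$ form a basis of the elementary abelian quotient $V=G/\Phi(G)\cong\mathbb{F}_p^{\,d}$. The number of ordered bases of $V$ is $|\mathrm{GL}_d(\mathbb{F}_p)|=(p^d-1)(p^d-p)\cdots(p^d-p^{d-1})$, and since $|\Phi(G)|=p^{n-d}$, each ordered basis of $V$ lifts to exactly $|\Phi(G)|^d=p^{d(n-d)}$ generating $d$-tuples of $G$ (one factor of $|\Phi(G)|$ for each of the $d$ coordinates). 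Hence the number of generating $d$-tuples of $G$ equals $p^{d(n-d)}(p^d-1)(p^d-p)\cdots(p^d-p^{d-1})=U(p;n,d)$.

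Next I would show that $\Aut(G)$ acts semiregularly (freely) on the set of generating $d$-tuples: if $\alpha\in\Aut(G)$ fixes a generating tuple $(g_1,\dots,g_d)$, then $\alpha$ fixes every word in the $g_i$, so $\alpha$ fixes all of $G$ and $\alpha=\mathrm{id}$. A free action on a finite set has all orbits of size $|\Aut(G)|$, so $|\Aut(G)|$ divides the number of generating $d$-tuples, which re-proves Hall's inequality $|\Aut(G)|\mid U(p;n,d)$. Moreover $|\Aut(G)|=U(p;n,d)$ holds exactly when there is a single orbit; and for a free action a single orbit means the action is at once transitive and regular, which yields the asserted equivalence (and the parenthetical ``or equivalently, regular'').

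There is essentially no hard step here: the argument is a direct refinement of Hall's counting. The only points that require care are the bookkeeping in the lift count (the exponent $d(n-d)$ coming from one copy of $\Phi(G)$ per coordinate) and the observation that freeness of the $\Aut(G)$-action automatically upgrades transitivity to regularity. The real content of the paper lies not in this lemma but in determining which $p$-groups actually attain the bound.
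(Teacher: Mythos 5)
Your proposal is correct and follows essentially the same route as the paper's own proof: count the generating $d$-tuples via Burnside's Basis Theorem (each ordered basis of $G/\Phi(G)$ lifting to $|\Phi(G)|^d$ tuples, giving $U(p;n,d)$ in total), note that $\Aut(G)$ acts semiregularly on them, and conclude that equality in Hall's bound is equivalent to transitivity, hence regularity. No gaps.
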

\begin{proof}For brevity we denote a $d$-tuple $(x_1,\ldots,x_d)$ by $(x_i)$. Let $T$ be the set of generating $d$-tuples of $G$, that is,
\[
T=\big\{(x_1,x_2,\ldots, x_d)\big|G=\langle x_1,x_2,\ldots, x_d\rangle\big\}.
\]
By Burnside's Basis Theorem $\bar G=G/\Phi(G)$ is elementary abelian of rank $d$ where $\Phi(G)$ is the Frattini subgroup of $G$. Regarding $\bar G$ as a linear space of dimension $d$ over the Galois field $\mathbb{F}_p$, a generating $d$-tuple of $\bar G$ is a base of the linear space. Thus the number of generating $d$-tuples of $\bar G$ is equal to $|GL(d,p)|= (p^d-1)(p^d-p)\cdots (p^d-p^{d-1})$.

Moreover, each generating $d$-tuple $(\bar x_i)$ of $\bar G$ lifts to precisely $|\Phi(G)|^d=p^{d(n-d)}$ generating $d$-tuples $\big\{(x_ig_i)\mid g_i\in\Phi(G)\big\},$ and each generating $d$-tuple of $G$ arises in this way. Hence $|T|=p^{d(n-d)}|GL(d,p)|=U(p;n,d)$. Since the action of $\Aut(G)$ on $T$ is semiregular, $G$ is maximally automorphic if and only if $\Aut(G)$ acts transitively (or equivalently, regularly) on $T$.
\end{proof}
The following result is an immediate consequence of Lemma~\ref{TRANSITIVE}.
\begin{theorem}\label{DESSIN}
 Let $G$ be a finite two-generator $p$-group, then $G$ determines a uniquely regular dessin if and only if it is maximally automorphic.
\end{theorem}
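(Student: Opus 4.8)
The plan is to combine the group-theoretic dictionary for regular dessins recalled in the Introduction with Lemma~\ref{TRANSITIVE} specialised to the case $d=2$. First I would observe that since $G$ is a two-generator $p$-group it has at least one ordered generating pair, so the set $T$ of Lemma~\ref{TRANSITIVE} (with $d=2$) is nonempty; consequently $G$ occurs as the automorphism group of at least one regular dessin, and the phrase ``$G$ determines a uniquely regular dessin'' is meaningful. Next I would invoke the correspondence of~\cite{Jones2013}: the isomorphism classes of regular dessins whose automorphism group is isomorphic to $G$ are in bijection with the orbits of $\Aut(G)$ acting on the set of ordered generating pairs of $G$. Under this bijection, ``$G$ determines a uniquely regular dessin'' translates verbatim into ``$\Aut(G)$ acts transitively on the ordered generating pairs of $G$''.

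The second and final step is to apply Lemma~\ref{TRANSITIVE} with $d=2$, which asserts that a two-generator finite $p$-group $G$ of order $p^n$ is maximally automorphic, i.e.\ $|\Aut(G)|=U(p;n,2)=p^{2(n-2)}(p^2-1)(p^2-p)$, if and only if $\Aut(G)$ is transitive on the generating pairs of $G$. Chaining the two equivalences yields the theorem immediately.

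I expect no genuine obstacle here: the only point requiring care is matching the two orbit descriptions. The dessin correspondence is stated for \emph{ordered} generating pairs and Lemma~\ref{TRANSITIVE} is likewise phrased for generating (ordered) $d$-tuples, so no reconciliation of ordered versus unordered pairs is needed. It is also worth recalling, as is already built into the proof of Lemma~\ref{TRANSITIVE}, that the $\Aut(G)$-action on generating tuples is automatically semiregular, since an automorphism fixing a generating set is the identity; this is precisely why ``transitive'' may be replaced by ``regular'' and why the number of orbits equals $|T|/|\Aut(G)|=U(p;n,2)/|\Aut(G)|$, which is $1$ exactly in the maximally automorphic case. Hence the whole argument is a short deduction once the combinatorics-to-group-theory dictionary is in place.
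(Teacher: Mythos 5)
Your proposal is correct and follows exactly the same route as the paper: translate ``uniquely regular'' into transitivity of $\Aut(G)$ on ordered generating pairs via the correspondence of~\cite{Jones2013}, then apply Lemma~\ref{TRANSITIVE} with $d=2$. The extra remarks on semiregularity and nonemptiness of $T$ are fine but already contained in the proof of that lemma.
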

\begin{proof}
$G$ determines a unique regular dessin if and only if $\Aut(G)$ acts transitively on the generating pairs of $G$, or equivalently, $G$ is maximally automorphic by Lemma~\ref{TRANSITIVE}.
\end{proof}

\begin{lemma}\label{QUOTIENT}
Let $G$ be a $d$-generator finite maximally automorphic $p$-group. If $N\Char G$ and $N\leq \Phi(G)$, then the following statements hold true:
\begin{itemize}
\item[\rm(i)]the quotient group $G/N$ is maximally automorphic,
\item[\rm(ii)]for each $\sigma\in\Aut(G)$, the mapping $\wp:\sigma\mapsto \bar\sigma$ is a group epimorphism from $\Aut(G)$ onto $\Aut(G/N)$ where $\bar\sigma(gN)=\sigma(g)N$,
\item[\rm(iii)]$\ker\wp=C_{\Aut(G)}(G/N)$ is a finite $p$-group of order $|N|^d$, where
\[
C_{\Aut(G)}(G/N)=\{\sigma\in\Aut(G)\mid g^{-1}\sigma(g)\in N\quad\text{for all $g\in G$}\},
\]
\item[\rm(iv)]$\Aut(G)$ is a semidirect product of $C_{\Aut(G)}(G/N)$ by a subgroup $Q\cong\Aut(G/N)$,
\item[\rm(v)]$\Aut(G)$ is transitive on the maximal subgroups of $G$.
\end{itemize}
\end{lemma}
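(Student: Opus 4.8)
The plan is to dispatch (i), (ii), (iii) and (v) quickly — each reduces to Lemma~\ref{TRANSITIVE}, i.e.\ to the equivalence between maximal automorphy and transitivity of the automorphism group on generating $d$-tuples — and then to concentrate on the splitting in (iv). The one elementary fact behind everything is that, since $N\le\Phi(G)$, we have $\Phi(G/N)=\Phi(G)/N$, hence $(G/N)/\Phi(G/N)\cong G/\Phi(G)$, so $G/N$ is again a $d$-generator $p$-group, and \emph{every} generating $d$-tuple of $G/N$ lifts to a generating $d$-tuple of $G$ (any lift of a set of generators modulo $\Phi(G)$ generates $G$). Given this, for (i): lift two generating $d$-tuples of $G/N$ to generating $d$-tuples of $G$, use maximal automorphy of $G$ and Lemma~\ref{TRANSITIVE} to get $\sigma\in\Aut(G)$ carrying the first onto the second, and note that because $N\Char G$ the map $\sigma$ descends to $\bar\sigma\in\Aut(G/N)$ carrying the first $d$-tuple of $G/N$ onto the second; hence $\Aut(G/N)$ is transitive on generating $d$-tuples and $G/N$ is maximally automorphic by Lemma~\ref{TRANSITIVE}. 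For (ii): $\wp$ is well defined exactly because $N$ is characteristic, and is visibly a homomorphism; for surjectivity fix a generating $d$-tuple $(x_i)$ of $G$ and $\tau\in\Aut(G/N)$, lift the generating $d$-tuple $(\tau(x_iN))$ of $G/N$ to a (necessarily generating) $d$-tuple $(y_i)$ of $G$, pick $\sigma\in\Aut(G)$ with $\sigma(x_i)=y_i$ by maximal automorphy of $G$, and note $\bar\sigma=\tau$ since the two agree on the generators $x_iN$.

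For (iii), the identity $\ker\wp=C_{\Aut(G)}(G/N)$ is just the definition unwound ($\bar\sigma=\mathrm{id}$ iff $g^{-1}\sigma(g)\in N$ for every $g$), and the order is Hall-bound bookkeeping: with $|N|=p^{m}$, combining $|\Aut(G)|=U(p;n,d)$ with $|\Aut(G/N)|=U(p;n-m,d)$ (from (i)) and surjectivity of $\wp$ gives $|\ker\wp|=U(p;n,d)/U(p;n-m,d)=p^{dm}=|N|^{d}$, in particular a $p$-group; the injection $\sigma\mapsto(x_i^{-1}\sigma(x_i))_i$ of $\ker\wp$ into $\prod_iN$ reproves the bound and the $p$-group property independently. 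For (v), the maximal subgroups of $G$ are in $\Aut(G)$-equivariant bijection with the hyperplanes of $G/\Phi(G)\cong\mathbb F_p^{d}$; by (ii) applied with $N=\Phi(G)$ the induced map $\Aut(G)\to\Aut(G/\Phi(G))=GL(d,p)$ is onto, and $GL(d,p)$ is transitive on hyperplanes, so $\Aut(G)$ is transitive on maximal subgroups.

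It remains to prove (iv). By (ii) and (iii) there is an exact sequence $1\to C\to\Aut(G)\xrightarrow{\ \wp\ }\Aut(G/N)\to 1$ with $C=C_{\Aut(G)}(G/N)$ a normal $p$-subgroup, and the assertion is precisely that it splits. I would first reduce to the case that $N$ is a \emph{minimal} characteristic subgroup of $G$: fixing a chain $1=N_0<N_1<\dots<N_r=N$ with $N_i\Char G$ and $N_i/N_{i-1}$ minimal characteristic in $G/N_{i-1}$, a complement to $C_{\Aut(G/N_{i-1})}(G/N_i)$ in $\Aut(G/N_{i-1})$ at each step assembles into a complement of $C$ in $\Aut(G)$, since a complement sitting inside a complement is again a complement (a short order-and-intersection check). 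For $N$ minimal characteristic, $\Omega_1(N)$ and $Z(G)\cap N$ are nontrivial characteristic subgroups of $G$ contained in $N$, so by minimality $N$ is elementary abelian and central; writing $\nu_\sigma(g)=g^{-1}\sigma(g)\in N$ for $\sigma\in C$, the subgroup of those $\sigma\in C$ fixing $N$ pointwise is, via $\sigma\mapsto\nu_\sigma$, isomorphic to $\mathrm{Hom}(G/\Phi(G),N)$, hence of order $|N|^{d}$; by the count in (iii) this is all of $C$, so $C\cong\mathrm{Hom}(G/\Phi(G),N)$ is elementary abelian and the extension has abelian kernel, its splitting controlled by $H^{2}(\Aut(G/N),C)$.

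The crux, and the step I expect to be the main obstacle, is to show that this obstruction class vanishes — i.e.\ actually to produce a complement $Q\cong\Aut(G/N)$, not merely to match orders. I see two routes: Gasch\"utz's splitting theorem, reducing the question to whether $C$ is complemented inside a Sylow $p$-subgroup $P\supseteq C$ of $\Aut(G)$; or a direct construction, choosing for each $\tau\in\Aut(G/N)$ a lift $\widehat\tau\in\Aut(G)$ sending a fixed generating $d$-tuple to a chosen lift of its $\tau$-image, in a way compatible with the regular action of $\Aut(G)$ on generating $d$-tuples so that $\tau\mapsto\widehat\tau$ is a homomorphism. Either way it is exactly the full strength of maximal automorphy — the regular action of Lemma~\ref{TRANSITIVE} — that has to be used here; the extension-theoretic input alone does not deliver the splitting, and it is in packaging this that the real work lies.
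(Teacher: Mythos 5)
Your treatments of (i), (ii), (iii) and (v) are correct. They run through transitivity and lifting of generating $d$-tuples (via Lemma~\ref{TRANSITIVE}) where the paper instead counts: it bounds $|\ker\wp|$ by the size of the fibre $\Delta=\{(x_ig_i)\mid g_i\in N\}$ on which $\ker\wp$ acts semiregularly, and squeezes $|\Aut(G/N)|$ between $U(p;n,d)/|N|^d$ and Hall's upper bound $U(p;n-m,d)$. The two arguments are equivalent given Lemma~\ref{TRANSITIVE}, and yours is if anything more transparent; the one fact you must (and do) supply is that $N\leq\Phi(G)$ forces every lift of a generating $d$-tuple of $G/N$ to generate $G$.

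The genuine gap is (iv): you do not prove it. You correctly set up the extension $1\to C\to\Aut(G)\to\Aut(G/N)\to1$, reduce soundly to $N$ minimal characteristic, and identify $C\cong\mathrm{Hom}(G/\Phi(G),N)$ in that case, but the splitting itself is left as "the real work" with two possible routes gestured at and neither carried out. This is not a removable formality: a normal $p$-subgroup $C$ with $\Aut(G)/C$ acting regularly on the blocks need not be complemented in general (the centre of $\mathrm{Q}_8$, with quotient acting regularly on four points, is the standard non-example), so something beyond the order count in (iii) genuinely has to be supplied. You should be aware that the paper's own proof of (iv) is no more than an assertion at exactly this point --- it states that "$\Aut(G)$ has a subgroup $Q\cong\Aut(G/N)$ transitive on the blocks" and deduces $K\cap Q=1$ and $\Aut(G)=K\rtimes Q$, without constructing $Q$ --- so you have correctly located where the content of (iv) lies; but a proposal that ends by naming the obstacle has not proved the statement, and to complete it you would need to execute one of your two routes (e.g.\ verify the hypothesis of Gasch\"utz's theorem for the elementary abelian $C$, or exhibit a compatible system of lifts $\tau\mapsto\widehat\tau$).
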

\begin{proof}
Assume $|G|=p^n$, then $|\Phi(G)|=p^{n-d}$. Since $N\leq\Phi(G)$ we may assume $|N|=p^m$ where $m\leq n-d$. By hypothesis $N\Char G$, so each automorphism $\sigma\in\Aut(G)$ of $G$ induces an automorphism $\bar \sigma$ of $G/N$ of the form $\bar\sigma:gN\mapsto \sigma(g)N$, and the mapping $\wp: \Aut(G)\to \Aut(G/N), \sigma\mapsto \bar \sigma$ is a group homomorphism.

Moreover, for any generating $d$-tuple $(x_i)$ of $G$, define a set of $d$-tuples as
\[
\Delta=\{(x_1g_1,x_2g_2,\ldots,x_dg_d)\big|~\text{each}~g_i\in N\}.
\]

Since $N\leq \Phi(G)$, each $d$-tuple in $\Delta$ generates $G$, and the group $K:=\ker\wp$ acts semiregularly on $\Delta$. Thus $|K|$ divides $|\Delta|=|N|^d=p^{md}$, and hence
\[
|\Aut(G/N)|\geq |\Aut(G)|/|K|\geq U(p;n,d)/|\Delta|=U(p;n-m,d).
\]
By Hall's theorem $|\Aut(G/N)|$ divides $U(p;n-m,d)$. Thus $|\Aut(G/N)|=U(p;n-m,d)$ and $G/N$ is maximally automorphic.

The above proof also implies that $|\Aut(G/N)|=|\Aut(G)|/|K|$ and $|K|=|\Delta|$, so the mapping $\wp$ is indeed an epimorphism and $K$ is indeed regular on $\Delta$. It follows that the set $T$ consisting of generating $d$-tuples of $G$ splits into $q:=|\Aut(G/N)|$ disjoint blocks of equal size $|K|$. It is easily seen that $K$ fixes every block, and $\Aut(G)$ has a subgroup $Q\cong\Aut(G/N)$ transitive on the blocks, so $K\cap Q=1$ and $\Aut(G)=K\rtimes Q$.

 Finally, since $G/\Phi(G)$ is elementary abelian, $\Aut(G/\Phi(G))$ is transitive on the maximal subgroups of $G/\Phi(G)$. Note that for each maximal subgroup $M$ of $G$ we have $\Phi(G)\leq M$, thus by (ii) $\Aut(G)$ is transitive on the maximal subgroups of $G$, as required.
\end{proof}

\begin{remark}
A finite group $G$ is called an \textit{MI-group} if all maximal subgroups of $G$ are isomorphic. MI-groups were investigated by Hermann \cite{Her81, Her90,Her95} and Mann \cite{Man95}. By Lemma~\ref{QUOTIENT}(v) every maximally automorphic $p$-group is an MI-group.
\end{remark}

\begin{example}Let $p>2$ be a prime, let $G$ be the non-abelian $p$-group of order $p^3$ and of exponent $p$ defined by the presentation
\[
G=\langle x,y\mid x^p=y^p=z^p=[z,x]=[z,y]=1, z:=[x,y]\rangle.
\]
Since $G'\leq Z(G)$ and $\exp(G)=p$, every generating pair $(x',y')$ of $G$ fulfils the above defining relations, so the mapping $x\mapsto x', y\mapsto y'$ extends to an automorphism of $G$. By Lemma~\ref{TRANSITIVE} the group $G$ is maximally automorphic. The associated uniquely regular dessin is of type $(p,p,p)$, embedded into an oriented surface of genus $p^2(p-3)/2+1$ (The reader is referred to \cite{HNW2015} for the formulae of calculating type and genus of a regular dessin). For $p=3$ the dessin is given by a regular embedding of the Pappus graph into the torus, as depicted in Fig.~\ref{Pap}, where the opposite sides of the outer hexagon are identified to form the torus.
\begin{figure}[h!]
  \centering
    \includegraphics[width=0.5\textwidth]{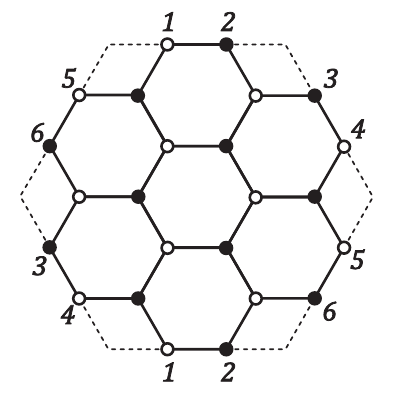}
      \caption{Pappus graph embedded into torus}\label{Pap}
\end{figure}
\end{example}

\begin{example}Let ${F}_d$ denote the free group of rank $d$, $H=\langle g^{p^e}\mid g\in F_d\rangle$, then $H\Char F_d$. Define $B(d,p^e)={F}_d/H$. The group $B(d,p^e)$, called the \textit{Burnside group} of exponent $p^e$ with $d$ generators, is not necessarily a finite group.  Let $K$ be the intersection of all finite-index subgroups of $B(d,p^e)$, then $K\unlhd B(d,p^e)$ and the quotient $R(d,p^e)=B(d,p^e)/K$ is finite by the positive answer to the restricted Burnside problem. By the construction every finite $d$-generator $p$-group of exponent no more than $p^e$ is a homomorphic image of $R(d,p^e)$. In particular, any mapping between two generating $d$-tuples of $R(d,p^e)$ extends to an automorphism of $R(d,p^e)$. Therefore by Lemma~\ref{TRANSITIVE} $R(d,p^e)$ is maximally automorphic. For example the restricted Burnside group $R(2,4)=B(2,4)$ has been investigated by Janko in \cite[\S 60]{BJ2008}. This is a group of order $2^{12}$ and class $5$. It determines a uniquely regular dessin of type $(4,4,4)$, embedded into a surface of genus $2^9+1$.
\end{example}

 Finite two-generator maximally automorphic $p$-groups of class two have been classified in~\cite[Theorem~5]{HNW2015}. The result reads as follows:
\begin{theorem}\label{CLASSTWO}{\rm\cite{HNW2015}}
Let $G$ be a finite two-generator $p$-group of nilpotency class two.  If $G$ is maximally automorphic then $G$ is isomorphic to one of the groups listed below:
\begin{itemize}
\item[\rm(i)]$p$ is odd and $1\leq b\leq a$:
\[
G=\langle x,y\mid x^{p^a}=y^{p^a}=z^{p^b}=[z,x]=[z,y]=1, z=[x,y]\rangle.
\]
\item[\rm(ii)]$p=2$ and $1\leq b\leq a-1$:
\[
G=\langle x,y\mid x^{2^a}=y^{2^a}=z^{2^b}=[z,x]=[z,y]=1, z=[x,y]\rangle.
\]
\item[\rm(iii)]$p=2$ and $a\geq 2$:
\[
G=\langle x,y\mid z^{2^{a-1}}=[z,x]=[z,y]=1, x^{2^{a-1}}=y^{2^{a-1}}=z^{2^{a-2}}, z=[x,y]\rangle.
\]
\end{itemize}
Moreover, the above groups are pairwise non-isomorphic.
\end{theorem}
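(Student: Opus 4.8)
The plan is to reconstruct $G$ from its class-two quotient. Put $z=[x,y]$, $u=[z,x]$, $v=[z,y]$ and let $\gamma=\gamma_3(G)$. As $G$ has class three, $\gamma\leq Z(G)$, so $u,v$ are central, and $\gamma\Char G$ with $\gamma\leq G'\leq\Phi(G)$. Because the degree-three component of the free Lie ring on two generators has dimension two, spanned by $[z,x]$ and $[z,y]$, we get $\gamma=\la u,v\ra$; hence $G'=\la z,u,v\ra$ is abelian and $G$ is metabelian. By Lemma~\ref{QUOTIENT}(i), $\bar G=G/\gamma$ is a maximally automorphic two-generator $p$-group of class two, so Theorem~\ref{CLASSTWO} pins down the orders of $\bar x,\bar y,\bar z$ and, when $p=2$, the twisted power relations among them. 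Thus $G$ is a central extension of one of the three class-two groups by the abelian group $\gamma$.

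The working criterion throughout is Lemma~\ref{TRANSITIVE}: $G$ is maximally automorphic iff $\Aut(G)$ is transitive on generating pairs, equivalently iff every generating pair $(x',y')$ satisfies the same defining relations as $(x,y)$. Since $(y,x)$ is also a generating pair, maximal automorphy provides an automorphism carrying $(x,y)$ to $(y,x)$; this interchanges $\la u\ra$ and $\la v\ra$ and forces $\gamma$ to be homocyclic of rank two, so $u$ and $v$ share a common order $p^c$ and $\gamma\cong C_{p^c}\times C_{p^c}$. The identities $u=[z,x]$, $v=[z,y]$ then yield the inequalities $c\leq b\leq a$. What remains is to determine the admissible values of the $p$-power elements $x^{p^a}$, $y^{p^a}$, $z^{p^b}\in\gamma$, and this is where the list branches by the prime.

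The main obstacle is the bookkeeping for $p$-th powers and commutators of an arbitrary generating pair, carried out through the collection (Hall--Petrescu) formula; in the metabelian class-three setting this introduces correction terms in $\gamma$ weighted by $\binom{p}{2}$ and $\binom{p}{3}$. For $p>3$ the group is regular (its class $3$ is less than $p$), these corrections vanish, and genericity holds precisely for $x^{p^a}=y^{p^a}=z^{p^b}=1$ with $1\leq c\leq b\leq a$, which is case (ii). For $p=3$ the class equals $p$, regularity fails, and the collection formula yields a nonzero $\gamma$-term exactly when $b=a$; requiring every generating pair to satisfy consistent relations then excludes the single case $c=b=a$, leaving case (i). For $p=2$ the base already splits into the three families of Theorem~\ref{CLASSTWO}, and the $\gamma$-corrections are nontrivial: depending on the base, genericity forces relations such as $x^{2^a}=u^{2^{c-1}}$, $y^{2^a}=v^{2^{c-1}}$ or $x^{2^{a-1}}=z^{2^{a-2}}$, producing the four families (iii)--(vi). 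Verifying the consistency (correct order $p^{2a+b+2c}$ and class exactly three) and the genericity of each surviving extension is the technical heart.

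For sufficiency I would confirm, for each group in the list, that the defining relations are generic in the sense of Lemma~\ref{TRANSITIVE}: every generating pair $(x',y')$ again satisfies them, so any bijection between two generating pairs extends to an automorphism and $G$ is maximally automorphic; one also checks $\gamma\neq1$ (as $c\geq1$) and $\gamma_4(G)=1$, so the class is exactly three. For the pairwise non-isomorphism I would use isomorphism invariants: the prime $p$; the order $|G|=p^{2a+b+2c}$; the exponent; the isomorphism type of $G/\gamma$, which separates the distinct class-two bases (for instance (iii) from (v) and (vi)); and the position of $x^{p^a}$ inside $\gamma$ read off from $G^{\mathrm{ab}}$ and from $G/\la u,v\ra$, which separates (iii) from (iv) and (v) from (vi). The parameter ranges then exclude coincidences within each family.
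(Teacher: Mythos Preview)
Your proof addresses the wrong theorem. The statement you were asked to prove is Theorem~\ref{CLASSTWO}, which classifies maximally automorphic two-generator $p$-groups of nilpotency class \emph{two}; your argument instead opens with ``As $G$ has class three'' and proceeds to classify the class-\emph{three} groups. You introduce $u=[z,x]$, $v=[z,y]$, form $\gamma=\gamma_3(G)$, and then invoke Theorem~\ref{CLASSTWO} itself as an already-known input to determine $G/\gamma$; the families (i)--(vi) you enumerate are those of Theorem~\ref{CLASSTHREE}, not the three families listed in the stated result. In short, you have sketched a proof of Theorem~\ref{CLASSTHREE}, not of Theorem~\ref{CLASSTWO}.

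Note also that the paper does not prove Theorem~\ref{CLASSTWO} at all: it is quoted from~\cite{HNW2015} and used as a black box. If your actual target was Theorem~\ref{CLASSTHREE}, then your outline is essentially the paper's own strategy: pass to $G/G_3$, apply Theorem~\ref{CLASSTWO} to that class-two base, show $G_3\cong \mathrm{C}_{p^c}^2$ via automorphisms (the paper uses the explicit maps $\tau$ and $\eta$ of Lemma~\ref{AUTO}, not only the swap, to get $\langle u\rangle\cap\langle v\rangle=1$), determine the extension data through metabelian power--commutator identities (the paper's Lemmas~\ref{META4}--\ref{META5} rather than Hall--Petrescu in name), verify genericity of the resulting presentations, and separate the families with the invariants $G_3$, $G'$, $G^{\mathrm{ab}}$ as in Table~\ref{TAB2}.
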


\section{Classification}
In this section we present a classification of two-generator maximally automorphic $p$-groups of class three. We shall use the standard notation from group theory. In particular, recall that $G^{(1)}=G'=[G,G]$ is the derived subgroup of $G$, and for $i\geq 1$ the $(i+1)$-th derived subgroup of $G$ is defined by induction as  $G^{(i+1)}=[G^{(i)},G^{(i)}]$. Moreover, we denote $G_1=G$, and $G_{i+1}=[G_i,G]$ for $i\geq 1$.

In what follows familiarity with the basic commutator formulae is assumed, see \cite[Chapter III]{Huppert1967}. In particular the following well known properties on metabelian groups will be frequently referred to. For the proof see  \cite{GK08,HK69} or \cite[Proposition 2.1.5]{XQ2010}
\begin{lemma}{\rm\cite{GK08,HK69,XQ2010}}\label{FORM}
Let $G$ be a metabelian group, $x,y,z\in G$. Then the following hold true:
\begin{itemize}
\item[\rm(i)]if $z\in G'$ then $[z,x]^{-1}=[z^{-1},x]$;
\item[\rm(ii)]if $y\in G'$ then $[xy,z]=[x,z][y,z]$ and $[z,xy]=[z,x][z,y]$;
\item[\rm(iii)]for any $x,y,z\in G$, $[x,y^{-1},z]^y=[y,x,z]$;
\item[\rm(iv)]for any $x,y,z\in G$, $[x,y,z][y,z,x][z,x,y]=1$;
\item[\rm(v)]if $z\in G'$ then $[z,x,y]=[z,y,x]$.
\end{itemize}
\end{lemma}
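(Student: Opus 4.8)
The plan is to deduce all five identities from the \emph{universal} commutator identities that hold in every group, exploiting the single structural hypothesis that $G$ is metabelian, i.e. $G'$ is abelian. The two consequences of metabelianness I will use repeatedly are: (a) any two elements of $G'$ commute, so $[a,b]=1$ whenever $a,b\in G'$; and (b) conjugation by an element of $G'$ acts trivially on $G'$, so $w^{g}=w$ whenever $w,g\in G'$. I will also use constantly that every commutator $[u,v]$ lies in $G'$ and that every left-normed triple commutator $[u,v,w]=[[u,v],w]$ lies in $[G',G]\le G'$. Thus each displayed quantity in (i)--(v) is an element of the abelian group $G'$, which is exactly what makes the conjugation and reordering manipulations collapse.

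First I would record the universal identities to be invoked, each a one-line element computation from $[x,y]=x^{-1}y^{-1}xy$ and $a^{b}=b^{-1}ab$: namely $[z^{-1},x]=\bigl([z,x]^{-1}\bigr)^{z^{-1}}$; the expansions $[xy,z]=[x,z]^{y}[y,z]$ and $[z,xy]=[z,y]\,[z,x]^{y}$; the conjugation rule $[x,y^{-1}]^{y}=[y,x]$; and the Hall--Witt identity $[x,y^{-1},z]^{y}\,[y,z^{-1},x]^{z}\,[z,x^{-1},y]^{x}=1$. For (i), since $z\in G'$ forces $[z,x]\in G'$, property (b) with $g=z^{-1}\in G'$ gives $\bigl([z,x]^{-1}\bigr)^{z^{-1}}=[z,x]^{-1}$, hence $[z^{-1},x]=[z,x]^{-1}$. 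For (ii), since $[x,z],[z,x]\in G'$ always, the assumption $y\in G'$ and property (b) kill the conjugations $[x,z]^{y}=[x,z]$ and $[z,x]^{y}=[z,x]$; the second claim of (ii) then additionally uses (a) to reorder $[z,y][z,x]=[z,x][z,y]$.

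Next I would treat (iii). Writing $[x,y^{-1},z]^{y}=\bigl[[x,y^{-1}]^{y},\,z^{y}\bigr]$ and applying the conjugation rule $[x,y^{-1}]^{y}=[y,x]$, I reduce the claim to $\bigl[[y,x],z^{y}\bigr]=[y,x,z]$. Since $z^{y}=z\,[z,y]$ with $[z,y]\in G'$, expanding the inner commutator and using (a) (to annihilate $[[y,x],[z,y]]$, a commutator of two elements of $G'$) together with (b) (to drop the residual conjugation by $[z,y]\in G'$) yields exactly $[y,x,z]$. This is the most delicate bookkeeping step, and I expect it to be the main obstacle: one must track the order of the two factors produced by the commutator expansion and be certain each vanishing or conjugation-trivialising step is justified by $[y,x]\in G'$ and $[z,y]\in G'$.

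Finally, (iv) follows by feeding (iii) into Hall--Witt: the three cyclic instances of (iii) turn the conjugated factors into $[y,x,z]$, $[z,y,x]$, $[x,z,y]$, so Hall--Witt becomes $[y,x,z]\,[z,y,x]\,[x,z,y]=1$. Using (i) on each factor (via $[y,x]=[x,y]^{-1}$, etc., noting $[x,y],[y,z],[z,x]\in G'$) rewrites these as $[x,y,z]^{-1}$, $[y,z,x]^{-1}$, $[z,x,y]^{-1}$; since all lie in the abelian group $G'$, inverting the product gives $[x,y,z][y,z,x][z,x,y]=1$. For (v), I apply (iv) with first entry $z\in G'$: the term $[x,y,z]=[[x,y],z]$ vanishes by (a) (both $[x,y]$ and $z$ lie in $G'$), leaving $[z,x,y][y,z,x]=1$; one further use of (i) identifies $[y,z,x]^{-1}=[z,y,x]$, whence $[z,x,y]=[z,y,x]$. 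Throughout, the only genuine content beyond routine manipulation is the consistent use of metabelianness to discard conjugations by, and commutators among, elements of $G'$.
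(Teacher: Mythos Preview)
Your proof is correct and complete: each of the five identities is derived cleanly from the universal commutator identities together with the two consequences of metabelianness you isolate (commutators in $G'$ commute, and conjugation by an element of $G'$ fixes $G'$ pointwise). The paper itself does not supply a proof of this lemma; it merely cites \cite{GK08,HK69,XQ2010}, so there is no in-paper argument to compare against. Your treatment is the standard one found in those references (Hall--Witt reduced via the conjugation identity $[x,y^{-1}]^{y}=[y,x]$ and then cleaned up using abelianness of $G'$), and it would serve perfectly well as a self-contained replacement for the citation.

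One minor expository point: in (v) your phrase ``apply (iv) with first entry $z\in G'$'' could be read as a variable substitution, whereas what you actually do (and what works) is apply (iv) verbatim to the given $x,y,z$ and then use the hypothesis $z\in G'$ to kill the factor $[x,y,z]=[[x,y],z]$. Saying this explicitly would remove any ambiguity.
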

 By induction  it is easy to extend the formula of Lemma~\ref{FORM}(v) as follows. Let
 $x_1,\ldots,x_n$ be arbitrary elements of a metabelian group $G$, then for any $z\in G'$ and for any permutation $\alpha$ of $\{1,2,\ldots, n\}$, we have $[z,x_1,x_2,\ldots,x_n]=[z,x_{\alpha(1)},x_{\alpha(2)},\ldots, x_{\alpha(n)}]$. Therefore for brevity we may denote \[
 [ix,jy]=[x,y, \underbrace{x,\ldots,x}_{i-1},\underbrace{y,\ldots,y}_{j-1}], \]
  where $i$ and $j$ are positive integers.

  To proceed we need more sophisticated formulae on metabelian groups. For the proof see \cite{GK08,HK69} or \cite[Chapter 2]{XQ2010}.
 \begin{lemma}\label{META3}{\rm\cite{GK08,HK69,XQ2010}}
Let $G=\langle x,y\rangle$ be a metabelian group. Then for any integer $s\geq2$,
\[
G_s=\la [ix,(s-i)y], G_{s+1}\mid i=1,2,\cdots,s-1\ra.
\]\end{lemma}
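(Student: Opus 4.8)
The plan is to prove the identity by induction on the weight $s$, driven by the bilinearity of the commutator bracket on consecutive factors of the lower central series together with the permutation rule of Lemma~\ref{FORM}(v). Since each listed commutator $[ix,(s-i)y]$ lies in $G_s$ and $G_{s+1}\le G_s$, the inclusion $\supseteq$ is immediate, so the content is that $G_s$ is generated modulo $G_{s+1}$ by the $s-1$ elements $[ix,(s-i)y]$, $i=1,\dots,s-1$. For the base case $s=2$ one has $G_2=G'$, and the commutator induces a biadditive alternating map $(G/G')\times(G/G')\to G_2/G_3$; since $G/G'$ is generated by the images of $x$ and $y$, the image is generated by $[x,y]=[1x,1y]$, giving $G_2=\la[x,y],G_3\ra$ (no metabelian hypothesis is needed here).

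For the inductive step, assume $G_s=\la[ix,(s-i)y],G_{s+1}\mid i=1,\dots,s-1\ra$ and write $G_{s+1}=[G_s,G]$. Using the standard commutator expansions $[ab,g]=[a,g]^b[b,g]$ and $[a,gh]=[a,h][a,g]^h$ (see \cite[Chapter III]{Huppert1967}), I first establish the congruences $[ab,g]\equiv[a,g][b,g]$ and $[a,gh]\equiv[a,g][a,h]\pmod{G_{s+2}}$ for $a,b\in G_s$, $g,h\in G$: every conjugation defect and every lower-weight correction produced in these expansions lands in $[G_{s+1},G]=G_{s+2}$. Because altering $a$ by an element of $G_{s+1}$, or $g$ by an element of $G_2$, changes $[a,g]$ only modulo $G_{s+2}$, it follows that $G_{s+1}$ is generated modulo $G_{s+2}$ by the commutators $[\,[ix,(s-i)y],w\,]$ with $i=1,\dots,s-1$ and $w\in\{x,y\}$.

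It remains to rewrite these left-normed commutators as listed weight-$(s+1)$ terms. Expanding and applying the extended form of Lemma~\ref{FORM}(v)---which permits arbitrary reordering of the entries following the initial factor $[x,y]\in G'$---yields $[\,[ix,(s-i)y],x\,]=[(i+1)x,(s-i)y]$ and $[\,[ix,(s-i)y],y\,]=[ix,(s+1-i)y]$. As $i$ ranges over $1,\dots,s-1$, the first family supplies $[jx,(s+1-j)y]$ for $j=2,\dots,s$ and the second for $j=1,\dots,s-1$; their union is precisely $\{[jx,(s+1-j)y]\mid j=1,\dots,s\}$, the claimed generating set for $G_{s+1}$ modulo $G_{s+2}$, which closes the induction.

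I expect the main obstacle to be the careful bookkeeping in the bilinearity step: one must verify that every conjugation defect $[a,g]^{c}[a,g]^{-1}$ and every correction term $[a,g,h]$ really lies in $G_{s+2}$, which rests on the inclusions $[G_{s+1},G_s]\le G_{s+2}$ and $[G_{s+1},G]=G_{s+2}$. By contrast, the metabelian hypothesis is used only in the final rewriting, where Lemma~\ref{FORM}(v) is exactly what collapses the a priori larger set of basic commutators (with nested brackets and unsorted tails) onto the single left-normed family $[ix,(s-i)y]$.
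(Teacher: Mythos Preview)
The paper does not supply its own proof of Lemma~\ref{META3}: the statement is quoted from the literature with the remark ``For the proof see \cite{GK08,HK69} or \cite[Chapter 2]{XQ2010}.'' So there is no in-paper argument to compare against.

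Your argument is correct and is essentially the standard one that appears in those references. The induction is driven exactly as you describe: bilinearity of the bracket $G_s/G_{s+1}\times G/G'\to G_{s+1}/G_{s+2}$ (which requires only the inclusions $[G_i,G_j]\le G_{i+j}$, not metabelianity) reduces the generating set of $G_{s+1}$ modulo $G_{s+2}$ to commutators $[\,[ix,(s-i)y],w\,]$ with $w\in\{x,y\}$, and then the metabelian reordering rule (the extended Lemma~\ref{FORM}(v)) identifies these with the listed $[jx,(s+1-j)y]$. Your remark that the metabelian hypothesis enters only at this last rewriting step is accurate and worth keeping: without it one would obtain the full basic-commutator generating set of $G_s/G_{s+1}$ rather than the short list of $s-1$ elements. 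The bookkeeping you flag---that all conjugation defects and correction terms land in $G_{s+2}$---is handled by $[G_{s+1},G]=G_{s+2}$ and $[G_s,G_2]\le G_{s+2}$, both of which are instances of $[G_i,G_j]\le G_{i+j}$, so there is no hidden difficulty there.
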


\begin{lemma}\label{META4}{\rm\cite{GK08,HK69,XQ2010}}
Let $G$ be a metabelian group, $x,y\in G$. Then for any positive integers $m$ and $n$
\begin{align}\label{POWER1}
[x^m,y^n]=&\prod_{i=1}^m\prod_{j=1}^n[ix,jy]^{{m\choose i}{n\choose j}}.
\end{align}
\end{lemma}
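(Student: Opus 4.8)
The plan is to reduce the whole identity to linear-algebraic manipulations inside the abelian group $G'$, exploiting the metabelian hypothesis. The pivotal observation is that for any fixed $w\in G$ the map $\delta_w\colon G'\to G'$, $c\mapsto[c,w]$, is an endomorphism of the abelian group $G'$. Indeed, $[c,w]=c^{-1}c^{w}\in G'$ because $G'\unlhd G$, so $\delta_w$ does land in $G'$; and for $c_1,c_2\in G'$ Lemma~\ref{FORM}(ii) gives $[c_1c_2,w]=[c_1,w][c_2,w]$ (the hypothesis $c_2\in G'$ is precisely what that lemma requires), i.e.\ $\delta_w(c_1c_2)=\delta_w(c_1)\delta_w(c_2)$. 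Consequently every composite $\delta_{w_1}\circ\cdots\circ\delta_{w_k}$ is again an endomorphism of $G'$, hence commutes with the formation of products and of $\mathbb{Z}$-powers inside $G'$. This is the engine that turns the non-commutative collection process into a clean binomial expansion.

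First I would prove the one-variable version $[x^m,y]=\prod_{i=1}^m[ix,y]^{\binom{m}{i}}$ by induction on $m$, the case $m=1$ being trivial. For the step, write $[x^m,y]=[x^{m-1}\cdot x,y]=[x^{m-1},y]^{x}[x,y]$ via $[ab,c]=[a,c]^{b}[b,c]$, and expand $[x^{m-1},y]^{x}=[x^{m-1},y]\,[x^{m-1},y,x]$ (valid since $[x^{m-1},y]\in G'$). Applying the inductive hypothesis to $[x^{m-1},y]$ and then the endomorphism $\delta_x$ to that identity yields $[x^{m-1},y,x]=\prod_{i=1}^{m-1}[ix,y,x]^{\binom{m-1}{i}}=\prod_{i=1}^{m-1}[(i+1)x,y]^{\binom{m-1}{i}}$, since appending one further $x$ turns $[ix,y]$ into $[(i+1)x,y]$. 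Collecting the three contributions inside the abelian group $G'$ and invoking Pascal's rule $\binom{m-1}{i}+\binom{m-1}{i-1}=\binom{m}{i}$ reproduces the claimed exponents for each $i=1,\dots,m$.

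Next I would lift this to both arguments. The symmetric one-variable identity $[a,y^{n}]=\prod_{j=1}^n[a,jy]^{\binom{n}{j}}$, valid for every $a\in G$, follows from the first formula applied to the pair $(y,a)$ together with the relation $[jy,a]^{-1}=[a,jy]$, which itself comes from applying $\delta_y^{\,j-1}$ to $[y,a]=[a,y]^{-1}$. Taking $a=x^{m}$ gives $[x^m,y^n]=\prod_{j=1}^n[x^m,jy]^{\binom{n}{j}}$. For each $j$ one has $[x^m,jy]=\delta_y^{\,j-1}\bigl([x^m,y]\bigr)$; applying the endomorphism $\delta_y^{\,j-1}$ to the one-variable formula for $[x^m,y]$ and using $\delta_y^{\,j-1}([ix,y])=[ix,jy]$ yields $[x^m,jy]=\prod_{i=1}^m[ix,jy]^{\binom{m}{i}}$. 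Substituting this and finally reordering the double product (legitimate because all factors lie in the abelian group $G'$) produces $[x^m,y^n]=\prod_{i=1}^m\prod_{j=1}^n[ix,jy]^{\binom{m}{i}\binom{n}{j}}$, as required.

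The main obstacle to watch is the bookkeeping of the index shifts and binomial coefficients in the inductive step, and the scrupulous verification—at each appearance—that the commutators being manipulated genuinely lie in $G'$, so that the endomorphism property of $\delta_w$ and the commutativity of $G'$ may legitimately be invoked. The only place where metabelianness is truly indispensable is the endomorphism lemma: were $G'$ non-abelian, the conjugation $[c_1,w]^{c_2}$ would contribute deeper correction terms and the expansion would cease to be purely binomial.
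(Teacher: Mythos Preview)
The paper does not supply a proof of this lemma at all: it is one of the ``more sophisticated formulae on metabelian groups'' that the authors explicitly import from \cite{GK08,HK69,XQ2010} without argument. So there is no paper-proof to compare against.

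Your proof is correct and self-contained. The key device---recognising that $\delta_w\colon c\mapsto[c,w]$ is an endomorphism of the abelian group $G'$, so that compositions $\delta_y^{\,j-1}$ and $\delta_x^{\,i-1}$ commute with products and powers---is exactly the standard mechanism behind these formulae, and it is what the cited references use. The one-variable induction with Pascal's rule, the passage to the second variable via $[a,y^n]=[y^n,a]^{-1}$ and $[jy,a]^{-1}=[a,jy]$, and the final application of $\delta_y^{\,j-1}$ to the expansion of $[x^m,y]$ all go through as you describe. The only implicit appeal you make beyond Lemma~\ref{FORM}(ii) is the permutation invariance $[z,x_1,\ldots,x_n]=[z,x_{\alpha(1)},\ldots,x_{\alpha(n)}]$ for $z\in G'$, needed to identify $\delta_y^{\,j-1}([ix,y])$ with $[ix,jy]$; the paper states this fact immediately after Lemma~\ref{FORM}, so it is available to you.
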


 \begin{lemma}\label{META5}{\rm\cite{GK08,HK69,XQ2010}}
Let $G$ be a metabelian group, $x,y\in G$. Then for any integer $m\geq2$
\begin{align}\label{POWER2}
(xy^{-1})^m=&x^m\Big(\prod_{i+j\leq m}[ix,jy]^{m\choose{i+j}}\Big)y^{-m}.
\end{align}
\end{lemma}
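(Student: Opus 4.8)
The plan is to induct on $m$ after isolating a clean recursion for the middle factor. Since $G/G'$ is abelian, $(xy^{-1})^m\equiv x^my^{-m}\pmod{G'}$, so there is a unique $c_m\in G'$ with $(xy^{-1})^m=x^mc_my^{-m}$, and the assertion is that $c_m=\prod_{i+j\le m}[ix,jy]^{\binom{m}{i+j}}$. The structural facts I will lean on are that each $[ix,jy]$ lies in $G_{i+j}\le G'$, that $G'$ is abelian (so these factors commute and their exponents simply add), and that the family $\{[ix,jy]\}$ is closed under the operations that arise: by Lemma~\ref{FORM}(v) and its stated extension, $[[ix,jy],x]=[(i{+}1)x,jy]$ and $[[ix,jy],y]=[ix,(j{+}1)y]$, so commutating a family member with $x$ or with $y$ merely raises its weight $i+j$ by one. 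Consequently no computation ever leaves $G'$, and the whole problem becomes one of tracking integer exponents.

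Comparing $x^{m+1}c_{m+1}y^{-(m+1)}=(xy^{-1})^{m+1}=(xy^{-1})(x^mc_my^{-m})$ and cancelling the outer powers yields $c_{m+1}=x^{-m}y^{-1}x^mc_my$. Writing $x^{-m}y^{-1}x^m=y^{-1}[y^{-1},x^m]$ this becomes the recursion
\[
c_{m+1}=\big([y^{-1},x^m]\,c_m\big)^{y}.
\]
Here I expand $[y^{-1},x^m]$ by Lemma~\ref{META4} together with the inversion rule Lemma~\ref{FORM}(i) for the negative power, and I expand the outer conjugation through $w^y=w[w,y]$; crucially, the conjugation by $y$ combines with the $y^{-1}$ sitting inside $[y^{-1},x^m]$ so that $c_{m+1}$ is again expressed purely in terms of the $[ix,jy]$ with $i,j\ge1$. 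The base case $c_1=1$ (empty product) and the check $c_2=[x,y]=[1x,1y]^{\binom{2}{2}}$ start the induction.

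What remains, and the only genuine obstacle, is the exponent bookkeeping. Substituting the conjectured closed form into the recursion and comparing the exponent of a fixed $[ix,jy]$ on both sides reduces the entire identity to Pascal's rule $\binom{m}{i+j-1}+\binom{m}{i+j}=\binom{m+1}{i+j}$, with the top-weight factors $i+j=m+1$ supplied by $[y^{-1},x^m]$. The delicate points are formal rather than conceptual: one must carry the negative power in $[y^{-1},x^m]$ with the correct sign, and one must keep the $x$- and $y$-weight-raisings from interfering. The cleanest way to organise this is to encode $[ix,jy]$ as the monomial $(\bar x-1)^{i-1}(\bar y-1)^{j-1}[x,y]$ in the relation module of the free metabelian group on $x,y$, where $[\,\cdot\,,x]$ acts as multiplication by $\bar x-1$ and $[\,\cdot\,,y]$ as multiplication by $\bar y-1$; then the recursion above becomes the linear recurrence $P_{m+1}=\bar yP_m+(\bar x^m-1)/(\bar x-1)$ for the generating polynomial $P_m=\sum_{i+j\le m}\binom{m}{i+j}(\bar x-1)^{i-1}(\bar y-1)^{j-1}$, which one solves in a line and matches the claim. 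This encoding also explains the role of the inverse: the factor contributed by $y^{-1}$ cancels against the conjugating $y$ and leaves the \emph{positive} binomials $\binom{m}{i+j}$, whereas replacing $y^{-1}$ by $y$ would leave a surviving $\bar y^{-1}$ and alternating signs, which is exactly why the identity is clean for $xy^{-1}$. Either way, once the Pascal identity is verified the induction closes.
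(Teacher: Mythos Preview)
The paper does not supply a proof of this lemma; it is quoted from \cite{GK08,HK69,XQ2010} (see the sentence immediately preceding Lemma~\ref{META3}), so there is no in-paper argument to compare against.

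Your induction is correct and is essentially the standard argument. The recursion $c_{m+1}=([y^{-1},x^m]c_m)^{y}$ simplifies, once the outer conjugation by $y$ absorbs the $y^{-1}$, to $c_{m+1}=[x^m,y]\cdot c_m^{y}$; expanding $[x^m,y]=\prod_{i\ge 1}[ix,1y]^{\binom{m}{i}}$ via Lemma~\ref{META4} and $c_m^{y}=c_m[c_m,y]$, and then comparing the exponent of each $[ix,jy]$ on both sides, does indeed reduce to Pascal's rule $\binom{m}{i+j}+\binom{m}{i+j-1}=\binom{m+1}{i+j}$, with the $j=1$ terms supplied by $[x^m,y]$. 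The free-metabelian relation-module encoding you sketch is a convenient way to package this bookkeeping, but the direct exponent comparison you outline just before it already suffices.
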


The following theorem on cyclic extensions of groups is well known.
\begin{theorem}\label{EXTEN}{\rm\cite[Theorem 3.36]{Isaacs2008}}
Let $N$ be a group and $m$ a positive integer, and let $a\in N$ and $\sigma\in\Aut(N)$. If
\[
a^{\sigma}=a\quad\text{and}\quad x^{\sigma^m}=x^a
\]
for all $x\in N$, then there exists a group $G$, unique up to isomorphism, and having $N$ as a normal subgroup
with the following properties:
\begin{itemize}
\item[\rm(i)]$G/N=\langle gN\rangle$ is cyclic of order $m$,
\item[\rm(ii)]$g^m=a$,
\item[\rm(iii)]$x^{\sigma}=x^g$.
\end{itemize}
\end{theorem}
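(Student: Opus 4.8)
The plan is to realise $G$ as a quotient of an auxiliary infinite cyclic extension, which converts the two hypotheses into exactly the conditions needed to collapse that extension correctly. First I would form the semidirect product $\hat G = N\rtimes\langle t\rangle$, where $\langle t\rangle$ is infinite cyclic acting on $N$ through $\sigma$, so that $x^t=t^{-1}xt=x^{\sigma}$ for every $x\in N$ and $\hat G/N\cong\mathbb Z$. Associativity of $\hat G$ is the standard semidirect-product fact and costs nothing. Inside $\hat G$ I would single out the element $z=t^m a^{-1}$ and set $G=\hat G/\langle z\rangle$, taking $g$ to be the image of $t$ and identifying $N$ with its image; the entire content of the theorem then reduces to showing that $\langle z\rangle$ is a well-behaved normal subgroup meeting $N$ trivially.

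The key step, and the genuinely delicate one, is to show that $z$ is central in $\hat G$; this is precisely where both hypotheses are consumed. Commuting $z$ past $t$ uses $a^{\sigma}=a$: since $a\in N$ we have $at=ta^{\sigma}=ta$, whence $zt=t^m a^{-1}t=t\,t^m a^{-1}=tz$. Commuting $z$ past an element $x\in N$ uses $x^{\sigma^m}=x^a$: iterating $xt=tx^{\sigma}$ gives $xt^m=t^m x^{\sigma^m}=t^m x^{a}=t^m a^{-1}xa$, so that $xz=xt^m a^{-1}=t^m a^{-1}x=zx$. Hence $\langle z\rangle$ is central, in particular normal. Moreover the projection $\hat G\to\mathbb Z$ killing $N$ sends $z$ to $m\ne 0$, so $z$ has infinite order and $\langle z\rangle\cap N=1$; consequently $N$ injects into $G=\hat G/\langle z\rangle$.

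With $G$ constructed, the three properties fall out from the isomorphism theorems. Property (iii) is inherited directly from $x^t=x^{\sigma}$ in $\hat G$. Property (ii) holds because $z=t^m a^{-1}$ becomes trivial in $G$, forcing $g^m=a$. For (i), $G/N\cong\hat G/N\langle z\rangle$, and since $\hat G/N\cong\mathbb Z$ while the image of $\langle z\rangle$ there is $m\mathbb Z$, this quotient is cyclic of order $m$ generated by $gN$.

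Finally I would establish uniqueness by a universal-property argument. Given any group $G'$ with properties (i)--(iii) and distinguished element $g'$, the inclusion $N\hookrightarrow G'$ together with $t\mapsto g'$ defines a homomorphism $\hat G\to G'$ (legitimate because $g'$ acts on $N$ as $\sigma$ by (iii)); it is surjective since $N$ and $g'$ generate $G'$, and its kernel contains $z$ because $z\mapsto (g')^m a^{-1}=1$ by (ii). Thus it factors through a surjection $\psi\colon\hat G/\langle z\rangle\twoheadrightarrow G'$ that restricts to the identity on $N$ and induces an isomorphism on the quotients by $N$ (both cyclic of order $m$). Any element of $\ker\psi$ maps to the identity of the quotient, hence lies in $N$, where $\psi$ is the identity; so $\ker\psi=1$ and $G'\cong\hat G/\langle z\rangle$. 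Since the right-hand side depends only on the data $(N,\sigma,a,m)$, the group is unique up to isomorphism. The one subtle point remains the centrality computation above; the alternative of defining a twisted multiplication directly on $N\times\mathbb Z_m$ would instead force one to verify associativity across the ``carry'' when exponents wrap past $m$, where the same two hypotheses reappear as the $2$-cocycle conditions.
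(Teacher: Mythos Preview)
Your proof is correct, but note that the paper does not actually prove this theorem: it is quoted as a known result with the citation \cite[Theorem~3.36]{Isaacs2008} and no proof is supplied. So there is nothing in the paper to compare against directly.

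For what it is worth, your route---forming the infinite semidirect product $N\rtimes\langle t\rangle$ and then killing the central element $z=t^{m}a^{-1}$---is a tidy variant of the standard textbook argument (as in Isaacs), which instead puts a twisted multiplication directly on the set $N\times\{0,1,\dots,m-1\}$ and checks associativity by hand. The associativity verification at the wrap-around step $g^{m-1}\cdot g=a$ in that approach is precisely the computation you package as ``$z$ is central''; both consume the hypotheses $a^{\sigma}=a$ and $x^{\sigma^{m}}=x^{a}$ in the same way. Your uniqueness argument via the universal property of $\hat G/\langle z\rangle$ is likewise equivalent to, but arguably cleaner than, the usual element-by-element comparison.
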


Now we turn to the classification of two-generator maximally automorphic $p$-groups of class three. The following technical result will be useful.
\begin{lemma}\label{AUTO}
Let $G=\langle x,y\rangle$ be a maximally automorphic $p$-group of class three,  denote $z=[x,y]$, $u=[z,x]$ and $v=[z,y]$. Then $G$ is metabelian and each of the assignments $\tau: x\mapsto y, y\mapsto x$, $\pi:x\mapsto x^{-1},  y\mapsto y$ and $\eta: x\mapsto x, y\mapsto yx$ extends to an automorphism of $G$, with the images of $z$, $u$ and $v$ under the corresponding automorphisms summarized in Table~\ref{TAB}
\begin{center}
\begin{threeparttable}[b]
\caption{Three Automorphisms}\label{TAB}
\begin{tabular*}{91mm}[c]{|p{36mm}|p{14mm}|p{14mm}|p{10mm}|}
\toprule
Automorphisms~$\sigma$    & $z^{\sigma}$         &$u^\sigma$            &$v^{\sigma}$\\
\hline
$\tau: x\mapsto y,y\mapsto x$   & $z^{-1}$  & $v^{-1}$   & $u^{-1}$\\
\hline
$\pi: x\mapsto x^{-1},y\mapsto y$   & $z^{-1}u$  & $u$   & $v^{-1}$\\
\hline
$\eta: x\mapsto x,y\mapsto yx$   & $zu$  & $u$   & $uv$\\
\bottomrule
\end{tabular*}
\end{threeparttable}
\end{center}
\end{lemma}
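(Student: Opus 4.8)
The plan is to establish the three assertions of the lemma in turn: that $G$ is metabelian, that each of the three assignments extends to an automorphism, and that the entries of Table~\ref{TAB} are correct. Since $G$ has nilpotency class three, $G_4=1$, and the standard inclusion $[G_i,G_j]\leq G_{i+j}$ gives $G''=[G_2,G_2]\leq G_4=1$; thus $G$ is metabelian and Lemma~\ref{FORM} (together with the later metabelian identities) is available. At the outset I would also record the two facts used constantly afterwards: first $[G_3,G]=G_4=1$, so $G_3\leq Z(G)$ and in particular $u=[z,x]$ and $v=[z,y]$ are central; second $z,u,v\in G'$, so Lemma~\ref{FORM}(i),(ii) may be invoked freely on these elements.

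\emph{Existence of $\tau,\pi,\eta$.} Because $G=\langle x,y\rangle$ is two-generated and maximally automorphic, Lemma~\ref{TRANSITIVE} says that $\Aut(G)$ is transitive on the ordered generating pairs of $G$. Hence it suffices to check that each of $(y,x)$, $(x^{-1},y)$ and $(x,yx)$ is again a generating pair, which is immediate: $\langle y,x\rangle=G$, $\langle x^{-1},y\rangle\ni x$ so equals $G$, and $\langle x,yx\rangle\ni (yx)x^{-1}=y$ so equals $G$. Consequently $\tau$, $\pi$ and $\eta$ each extend (uniquely) to an automorphism of $G$, determined by the images of $x$ and $y$.

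\emph{Filling in the table.} This is a direct commutator computation using $z^{x}=z[z,x]=zu$, the centrality of $u$ and $v$, the identity $[a,bc]=[a,c][a,b]^{c}$, and Lemma~\ref{FORM}(i),(ii). For instance $z^{\tau}=[y,x]=z^{-1}$, whence $u^{\tau}=[z^{-1},y]=[z,y]^{-1}=v^{-1}$ and $v^{\tau}=[z^{-1},x]=u^{-1}$; and $z^{\eta}=[x,yx]=z^{x}=zu$, whence $u^{\eta}=[zu,x]=[z,x][u,x]=u$ and $v^{\eta}=[zu,yx]=[zu,y][zu,x]=vu$. The only entry needing slight care is the $\pi$ row: since $x\notin G'$ one cannot apply Lemma~\ref{FORM}(i) directly to $[x^{-1},y]$, so I would instead use $[x^{-1},y]=\bigl([x,y]^{x^{-1}}\bigr)^{-1}$ together with $z^{x^{-1}}=zu^{-1}$ (which follows from $z^{x}=zu$ and the centrality of $u$) to obtain $z^{\pi}=(zu^{-1})^{-1}=z^{-1}u$; the remaining entries $u^{\pi}=[z^{-1}u,x^{-1}]=[z^{-1},x^{-1}]=[z,x^{-1}]^{-1}=u$ and $v^{\pi}=[z^{-1}u,y]=[z^{-1},y]=v^{-1}$ then drop out of Lemma~\ref{FORM}(i),(ii) and centrality.

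I expect the only real obstacle to be keeping the commutator bookkeeping straight in the $\pi$ row, where an inverse appears in an argument not lying in $G'$; every other entry, and the existence of the three automorphisms, follows mechanically from the transitivity of $\Aut(G)$ on generating pairs and the metabelian identities of Lemma~\ref{FORM}.
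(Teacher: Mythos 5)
Your proposal is correct and follows essentially the same route as the paper's proof: metabelianity from $G''\leq G_4=1$, existence of $\tau,\pi,\eta$ from Lemma~\ref{TRANSITIVE} applied to the generating pairs $(y,x)$, $(x^{-1},y)$, $(x,yx)$, and the table entries by direct commutator calculation using the centrality of $G_3$ and Lemma~\ref{FORM}. Your handling of the $\pi$ row via $[x^{-1},y]=\bigl([x,y]^{x^{-1}}\bigr)^{-1}$ and $z^{x^{-1}}=zu^{-1}$ is a minor cosmetic variant of the paper's conjugation trick $[x^{-1},y]=[y,x]^{x^{-1}}$ and yields the same result.
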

\begin{proof}By hypothesis $G$ is a $p$-group of class three, so $G_4=1$. Since $G^{(2)}\leq G_{4}$~\cite[Theorem 2.12, Chapter III]{Huppert1967}, we have $G^{(2)}=1$, that is, $G$ is metabelian. Note that $G=\langle x,y\rangle=\langle x^{-1},y\rangle=\langle x,yx\rangle$. Since $G$ is maximally automorphic, by Lemma~\ref{TRANSITIVE} each of the above assignments $\tau,\pi$ and $\eta$ extends to an automorphism of $G$.

To calculate the images of $z$, $u$ and $v$ we employ the basic commutator formulae from Lemma~\ref{FORM}. Then
\begin{align*}
z^{\tau}&=[x,y]^{\tau}=[x^{\tau},y^{\tau}]=[y,x]=z^{-1},\\
u^{\tau}&=[z,x]^{\tau}=[z^{\tau},x^{\tau}]=[z^{-1},y]=[z,y]^{-1}=v^{-1},\\
v^{\tau}&=[z,y]^{\tau}=[z^\tau,y^\tau]=[z^{-1},x]=[z,x]^{-1}=u^{-1}.
\end{align*}
Similarly, for $\pi$ we have
\begin{align*}
z^{\pi}=&[x^{-1},y]=[x^{-1},y]^{xx^{-1}}=[y,x]^{x^{-1}}=[y,x][y,x,x^{-1}]\\
=&[x,y]^{-1}[y,x,x]^{-1}=[x,y]^{-1}[x,y,x]=z^{-1}u,\\
u^{\pi}=&[z^{\pi},x^{\pi}]=[z^{-1}u,x^{-1}]=[z^{-1},x^{-1}]=[z,x]=u,\\
v^{\pi}=&[z^{\pi},y^{\pi}]=[z^{-1}u,y]=[z^{-1},y]=v^{-1}.
\end{align*}
Finally, for $\eta$ we have
\begin{align*}
z^{\eta}&=[x,y]^{\eta}=[x,yx]=[x,y]^x=z^x=zu,\\
u^{\eta}&=[zu,x]=[z,x]^u[u,x]=[z,x]=u,\\
v^{\eta}&=[zu,yx]=[z,yx]=[z,x][z,y]^x=uv^x=uv.
\end{align*}
We remark that in the proof we have used the fact that $G_4=1$ and $G_3=\langle u,v\rangle\leq Z(G)$.
\end{proof}

\begin{remark}\label{MARK}
With the notation as in Lemma~\ref{AUTO}, the formulae in Lemma~\ref{META4} and \ref{META5} are reduced to the following form:
\begin{align*}
[x^m,y^n]&=[x,y]^{mn}[x,y,x]^{n{m\choose2}}[x,y,y]^{m{n\choose2}}=z^{mn}u^{n{m\choose2}}v^{m{n\choose2}},\\
(xy^{-1})^m&=x^m[x,y]^{m\choose 2}[x,y,x]^{m\choose3}[x,y,y]^{m\choose3}y^{-m}.
\end{align*}
Replacing $y^{-1}$ by $y$ in the second identity we obtain
\[
(xy)^m=x^m[x,y^{-1}]^{m\choose2}[x,y^{-1},x]^{m\choose3}[x,y^{-1},y^{-1}]^{m\choose3}y^m.
\]
Since $[x,y^{-1}]=z^{-1}v$, $[x,y^{-1},x]=u^{-1}$ and $[x,y^{-1},y^{-1}]=v$ we get
\begin{align}\label{PRT}
(xy)^m=x^mz^{-{m\choose2}}u^{-{m\choose3}}v^{{m\choose2}+{m\choose3}}y^m.
\end{align}
Applying $\tau$ to Eq.~\eqref{PRT} we obtain
\begin{align}\label{PRT2}
(yx)^m=y^mz^{{m\choose2}}u^{-{m\choose2}-{m\choose3}}v^{{m\choose3}}x^m.
\end{align}
\end{remark}

\begin{remark}
The automorphisms $\tau$, $\pi$ and $\eta$ correspond to three types of dessin operations studied in \cite{JP2010}: the first swaps the black and the white vertices, the second is the Petrie duality operation, and the last interchanges the black vertices and faces. It was shown that the three operations generate the entire group $\Omega$ of dessin operations which is isomorphic to $GL(2,\mathbb{Z})$.
\end{remark}

\begin{lemma}\label{CTG}With the same hypothesis and notation as Lemma~\ref{AUTO}, $G_3$ is a homocyclic $p$-group of rank two with a presentation
 \begin{equation*}
G_3=\langle u,v\mid u^{p^c}=v^{p^c}=[u,v]=1\rangle\cong \mathrm{C}_{p^c}^2\quad\text{for some $c\geq1$.}
\end{equation*}
\end{lemma}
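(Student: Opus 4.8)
The plan is to pin down $G_3$ by combining the structure of $G$ as a maximally automorphic $p$-group with the commutator identities available in a metabelian group of class three. First I would record that $G_3 = [G_2,G] = [G',G]$ is generated, by Lemma~\ref{META3} with $s=3$, by $[x,y,x]=u$ and $[x,y,y]=v$ modulo $G_4 = 1$; hence $G_3 = \langle u,v\rangle$. Since $G$ has class three, $G_3 \leq Z(G)$, so $G_3$ is abelian and $u,v$ commute. Thus $G_3$ is a $2$-generator abelian $p$-group, say $G_3 \cong \mathrm{C}_{p^{c_1}}\times \mathrm{C}_{p^{c_2}}$ with $c_1 \geq c_2 \geq 0$ (the case $c_2 = 0$, i.e.\ $G_3$ cyclic, must be excluded separately). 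The whole point is to show $c_1 = c_2 = c \geq 1$, i.e.\ $G_3$ is homocyclic of rank exactly two.

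Next I would exploit the automorphism $\tau: x\mapsto y,\ y\mapsto x$ from Lemma~\ref{AUTO}, under which $u^\tau = v^{-1}$ and $v^\tau = u^{-1}$. Since $G_3 \Char G$, $\tau$ restricts to an automorphism of $G_3$ that interchanges (up to inversion) the two cyclic factors generated by $u$ and $v$; more precisely $\tau$ swaps $\langle u\rangle$-type and $\langle v\rangle$-type invariants, forcing the orders of $u$ and $v$ in $G_3$ to coincide, say $p^c$ with $c = c_1 = c_2$. To see $G_3$ is genuinely rank two (not cyclic), I would argue that if $G_3$ were cyclic then $G$ would have class-$2$ quotient structure incompatible with $G_4=1$ but $G_3 \neq 1$; more cleanly, one checks that $u$ and $v$ are independent in $G_3/\Phi(G_3)$ using that the images $\bar u,\bar v$ in $G_3/(G_3)^p$ are permuted by $\tau$ and that a relation $u^i v^j \in (G_3)^p$ with $\tau$ acting would collapse to a single generator, contradicting that $G$ (of class three) cannot have $G_3$ cyclic of the same exponent as forced — here one uses the maximal-automorphic hypothesis via Lemma~\ref{QUOTIENT}, since quotients by characteristic subgroups of $\Phi(G)$ are again maximally automorphic and the class-two classification in Theorem~\ref{CLASSTWO} constrains what $G/G_3$ looks like. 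Finally $c\geq 1$ because class exactly three means $G_3 \neq 1$.

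I expect the main obstacle to be the rank-two (non-cyclic) assertion: the generation $G_3 = \langle u,v\rangle$ and the equality of exponents via $\tau$ are essentially formal, but ruling out $G_3$ cyclic requires showing that $u$ and $v$ are not both powers of a common element, which is where one must genuinely use that $G$ is maximally automorphic rather than an arbitrary metabelian class-three $2$-generator $p$-group. The natural way to close this is: suppose $G_3 = \langle w\rangle$ is cyclic; then $\Phi(G_3)$ is characteristic in $G$ and contained in $\Phi(G)$, so $G/\Phi(G_3)$ is maximally automorphic by Lemma~\ref{QUOTIENT}(i) with $G_3/\Phi(G_3) \cong \mathrm{C}_p$ central; one then shows such a group has an automorphism structure violating the count $U(p;n,2)$ — for instance because the element $w$ of order $p$ in the centre lying in the third term of the lower central series cannot be moved by $\Aut$ in enough ways — contradicting Lemma~\ref{TRANSITIVE}. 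Assembling these pieces gives the presentation $G_3 = \langle u,v \mid u^{p^c} = v^{p^c} = [u,v] = 1\rangle \cong \mathrm{C}_{p^c}^2$ as claimed.
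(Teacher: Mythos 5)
Your outline correctly reduces the lemma to two facts: $G_3=\langle u,v\rangle$ (Lemma~\ref{META3} with $s=3$ and $G_4=1$) and $o(u)=o(v)$ (via $\tau$, since $u^{\tau}=v^{-1}$). But the step you yourself flag as the main obstacle --- that $u$ and $v$ are independent, i.e.\ $\langle u\rangle\cap\langle v\rangle=1$ --- is never actually closed. Note first that equality of the orders of the two \emph{generators} does not by itself determine the invariant factors of the abelian group they generate (a cyclic group of order $p^2$ is generated by two elements each of order $p^2$), so the ``$\tau$ swaps the invariants, hence $c_1=c_2$'' step does not yield homocyclic of rank two. And the two escape routes you sketch --- a contradiction with $G_4=1$, or an unspecified violation of Hall's bound for $G/\Phi(G_3)$ when $G_3$ is cyclic --- are not carried out; neither is it clear that they can be made to work as stated, so as written the rank-two assertion is unproved.

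The paper closes this gap with a one-line computation using the \emph{other} automorphism from Lemma~\ref{AUTO}, namely $\eta:x\mapsto x,\ y\mapsto yx$, which fixes $u$ and sends $v$ to $uv$. Write $\langle u\rangle\cap\langle v\rangle=\langle u^i\rangle$, so $u^i=v^j$ for some $j$. Applying $\eta$ gives $u^i=(uv)^j=u^jv^j$ (the factors are central, hence commute), so $v^j=u^i=u^jv^j$ and therefore $u^j=1$; since $o(u)=o(v)$ this forces $v^j=1$ as well, hence $u^i=1$ and the intersection is trivial. Then $G_3=\langle u\rangle\times\langle v\rangle\cong\mathrm{C}_{p^c}^2$, with $c\geq 1$ because $G$ has class exactly three. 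So the missing idea in your proposal is precisely the application of $\eta$ to a hypothetical relation $u^i=v^j$; everything else in your sketch is either already formal or becomes unnecessary once this is done.
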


\begin{proof}By Lemma~\ref{META3} we have $G_3=\langle u,v\rangle$. Assume that $\langle u\rangle\cap\langle v\rangle=\langle u^i\rangle$, then $u^i=v^j$ for some integer $j$. Applying $\eta$ to the relation we have $u^i=(uv)^j$, so $v^j=u^i=(uv)^j=u^jv^j$, and hence $u^j=1$; since $\tau(u)=v^{-1}$, we have $o(u)=o(v)=p^c$ for some integer $c\geq0$. Consequently $v^j=1$, whence $\langle u\rangle\cap\langle v\rangle=1$. Since $G_3>1$, we have $c\geq1$. Therefore $G_3$ has the claimed presentation.
\end{proof}

\noindent\textbf{Proof of Theorem \ref{CLASSTHREE}:}
By hypothesis $G$ is maximally automorphic of class three. Since $G_3\leq\Phi(G)$ and $G_3\Char G$, by Lemma~\ref{QUOTIENT} the quotient $\bar G=G/G_3$ is maximally automorphic of class two. It follows that $\bar G$ is one of the groups listed in Theorem~\ref{CLASSTWO}. Denote $u=[x,y,x]$ and  $v=[x,y,y]$. Then by Lemma~\ref{CTG} $o(u)=o(v)=p^c$, $c\geq1$, and $G_3=\langle u,v\mid u^{p^c}=v^{p^c}=[u,v]=1\rangle\cong \mathrm{C}_{p^c}^2$. Therefore $G$ is a central extension of a homocyclic $p$-group $G_3\cong\mathrm{C}_{p^c}^2$ by a maximally automorphic $p$-group of class two. In what follows, we prove the result in three steps.

\begin{step}[1]  Determination of the presentation of $G$. \par
We distinguish two cases according to the presentation of $\bar G$ listed in Theorem~\ref{CLASSTWO}.
\begin{case}[A]$\bar G$ has a presentation of the form
\[
\bar G=\langle \bar x,\bar y\mid \bar x^{p^a}=\bar y^{p^a}=\bar z^{p^b}=[\bar z,\bar x]=[\bar z,\bar y]=1,\bar z=[\bar x,\bar y]\rangle,
\]
where $p$ is a prime, $p\geq 2$. We assume that
\begin{align}
& z^{p^b}=u^iv^j,\label{PEQN1}\\
& x^{p^a}=u^rv^s,\label{PEQN2}
\end{align}
where $i,j,r,s\in\mathbb{Z}_{p^c}$. Then $u^{p^b}=[z,x]^{p^b}=[z^{p^b},x]=[u^iv^j,x]=1,$ so $c\leq b$.
Applying the automorphisms $\tau$ and $\eta$ to Eq.~\eqref{PEQN1} we obtain $z^{p^b}=u^jv^i$ and $(zu)^{p^b}=u^{i+j}v^j$. Since $c\leq b$, the latter is reduced to $z^{p^b}=u^{i+j}v^j$. Combining these relations with Eq.~\eqref{PEQN1} yields $u^{i-j}=v^{i-j}$ and $u^j=1$, so $i\equiv j\equiv0\pmod{p^c}$, which implies that $o(z)=p^b$ and $\langle z\rangle\cap G_3=1$. Moreover, by Lemma~\ref{META4} and using substitution for $x^{p^a}$ in Eq.~\eqref{PEQN2} we have
\[
1=[u^rv^s,y]\stackrel{\eqref{PEQN2}}=[x^{p^a},y]\stackrel{\eqref{POWER1}}=z^{p^a}u^{p^a\choose2},
 \]
 so $z^{p^a}=u^{-{p^a\choose2}}$.
 Since $\langle z\rangle\cap G_3=1$,  we have $z^{p^a}=u^{-{p^a\choose2}}=1$, whence $b\leq a$ and
\begin{align}
{p^a\choose 2}\equiv0\pmod{p^c}.\label{EQN1}
\end{align}

Moreover, applying $\pi$ and $\eta$ to Eq.~\eqref{PEQN2} we have $x^{p^a}=u^{-r}v^{s}$ and $x^{p^a}=u^{r+s}v^s.$
Combining these relations with Eq.~\eqref{PEQN2} yields $u^{2r}=1$ and $u^s=1$, so $s\equiv0\pmod{p^c}$ and
\begin{align}
2r\equiv0\pmod{p^c}.\label{EQN5}
\end{align}
Therefore Eq.~\eqref{PEQN2} is reduced to $x^{p^a}=u^r$. Applying $\tau$ to this relation we get $y^{p^a}=v^{-r}$, and applying $\eta$ to the latter equation yields   $(yx)^{p^a}=u^{-r}v^{-r}$. So by Lemma~\ref{META5} we have
\begin{align*}
u^{-r}v^{-r}&=(yx)^{p^a}\stackrel{\eqref{PRT2}}=y^{p^a}z^{p^a\choose2}u^{-{p^a\choose2}-{p^a\choose3}}v^{p^a\choose3}x^{p^a}\\
&=z^{p^a\choose 2}u^{r-{p^a\choose2}-{p^a\choose3}}v^{{p^a\choose3}-r}\stackrel{\eqref{EQN1}}=z^{p^a\choose2}u^{r-{p^a\choose3}}v^{{p^a\choose3}-r}.
\end{align*}
This is reduced to $z^{p^a\choose2}=u^{-2r+{p^a\choose3}}v^{-{p^a\choose3}}\stackrel{\eqref{EQN5}}=u^{{p^a\choose3}}v^{-{p^a\choose3}}.$  Since $\langle z\rangle\cap G_3=\langle u\rangle\cap\langle v\rangle=1$, we get
\begin{align}
{p^a\choose2}\equiv 0\pmod{p^b},\label{EQN7}\\
{p^a\choose3}\equiv 0\pmod{p^c}.\label{EQN6}
\end{align}
If $p>2$, then by Eq.~\eqref{EQN5} we get $r\equiv0\pmod{p^c}$; in particular, if $p=3$, then by Eq.~\eqref{EQN6} we have $c\leq a-1$. Consequently we obtain the groups in (i) and (ii). On the other hand, if $p=2$, then by Eq.~\eqref{EQN7} we get $b\leq a-1$, and by  Eq.~\eqref{EQN5} either $r\equiv0\pmod{2^c}$ or $r\equiv 2^{c-1}\pmod{2^c}$, corresponding to the groups in (iii) and (iv) respectively.
\end{case}

\begin{case}[B]$\bar G$ has a presentation of the form
\[
\bar G=\langle \bar x,\bar y\mid \bar z^{2^{a-1}}=[\bar z,\bar x]=[\bar z,\bar y]=1,  \bar x^{2^{a-1}}=\bar y^{2^{a-1}}=\bar z^{2^{a-2}},\bar z=[\bar x,\bar y]\rangle.
\]
As before we assume that
\begin{align}
&z^{2^{a-1}}=u^iv^j,\label{EQN8}\\
&x^{2^{a-1}}=z^{2^{a-2}}u^rv^s,\label{EQN9}
\end{align}
where $i,j,r,s\in\mathbb{Z}_{2^{c}}$. Since $u^{2^{a-1}}=[z^{2^{a-1}},x]=[u^iv^j,x]=1$,  we have $c\leq a-1$. Applying $\tau$ and $\eta$ to Eq.~\eqref{EQN8} we get $z^{2^{a-1}}=u^jv^i$ and $z^{2^{a-1}}=u^{i+j}v^j$. Combining these with Eq.~\eqref{EQN8} we obtain $u^{i-j}=v^{i-j}$ and $u^j=1$, so $i\equiv j\equiv0\pmod{2^{c}}$, and hence $o(z)=2^{a-1}$ and $\langle z\rangle\cap G_3=1$.

Moreover, by Lemma~\ref{META4} we have $[x^{2^{a-1}},y]=z^{2^{a-1}}u^{2^{a-1}\choose 2}=u^{2^{a-1}\choose 2}$, and by Eq.~\eqref{EQN9} we have $[x^{2^{a-1}},y]=[z^{2^{a-2}}u^rv^s,y]=[z^{2^{a-2}},y]=v^{2^{a-2}}$, so $u^{{2^{a-1}\choose2}}=v^{2^{a-2}}$. Since $\langle u\rangle\cap \langle v\rangle=1$, we get $u^{{2^{a-1}\choose2}}=v^{2^{a-2}}=1$, so $c\leq a-2$. Applying  $\eta$ to Eq.~\eqref{EQN9} yields
$x^{2^{a-1}}=z^{2^{a-2}}u^{r+s}v^s.$ Combining this with Eq.~\eqref{EQN9} we get $u^{s}=1$, so $s\equiv0\pmod{2^c}$. Hence Eq.~\eqref{EQN9} is reduced to $x^{2^{a-1}}=z^{2^{a-2}}u^r$. By applying $\pi_1=\tau\pi\tau:x\mapsto x, y\mapsto y^{-1}$ this relation is transformed to $x^{2^{a-1}}=z^{2^{a-2}}u^{-r}$. Combing these two relations we get $z^{2^{a-2}}u^r=x^{2^{a-1}}=z^{2^{a-2}}u^{-r}$, so $u^{2r}=1$, and hence $2r\equiv0\pmod{2^c}$. It follows that either $r\equiv0\pmod{2^c}$ or $r\equiv2^{c-1}\pmod{2^c}$, corresponding to the groups in (v) and (vi) of Theorem~\ref{CLASSTHREE}.
\end{case}
\end{step}
It remains to show that in each case the group $G$ given by the presentation is the desired extension, provided that the numerical conditions are satisfied. We will demonstrate the proof for Case (i) of Theorem~\ref{CLASSTHREE} and leave other cases to the reader. We start with an abelian group $N$ defined by the presentation
 \[
 N=\langle u,v,z\mid u^{3^c}=v^{3^c}=z^{3^b}=[u,v]=[u,z]=[v,z]=1\rangle,
 \]
 where $1\leq c\leq b$. Add an element $x$ to $N$ by
 \[
 x^{3^a}=1, u^x=u,v^x=v, z^x=zu,
  \]
  where $c\leq a$. Then by Theorem \ref{EXTEN} it is easily verified that $H=\langle N,x\rangle$ is an extension of $N$ by $\langle x\rangle\cong\mathrm{C}_{3^a}$. Moreover, add an element $y$ to $H$ by
  \[
  y^{3^a}=1, u^y=u, v^y=v,z^y=zv, x^y=xz,
  \]
  where $b\leq a$. By Theorem \ref{EXTEN} again $K=\langle H,y\rangle$ is an extension of $H$ by $\langle y\rangle\cong\mathrm{C}_{3^a}$. Thus $K$ is a finite group with the presentation
  \[
  \begin{aligned}
  K=\langle u,v,z,x,y\mid &u^{3^c}=v^{3^c}=z^{3^b}=[u,v]=[u,z]=[v,z]=1,\\
  &x^{3^a}=1, u^x=u,v^x=v, z^x=zu, \\
  &y^{3^a}=1, u^y=u, v^y=v,z^y=zv, x^y=xz\rangle,
  \end{aligned}
  \]
  where the numerical condition is $1\leq c\leq b\leq a$. Observe that in the presentation of $K$
   the relations $[u,v]=[u,z]=[u,z]=1$ can be derived from the others, so can be deleted, and the relations $z^x=zu$, $u^x=u$, $v^x=v$, $u^y=u$, $v^y=v$, $x^y=xz$, $z^x=zu$ and $z^y=zv$ can be rewritten as $[x,u]=1$, $[x,v]=1$, $[y,u]=1$, $[y,v]=1$, $z=[x,y]$, $u=[z,x]$ and $v=[z,y]$, respectively. In  particular if $1\leq c<b=a$ or $1\leq c\leq b\leq a-1$ then we have $K=G$ (the reason why we have more restrictive numerical conditions is that $G$ is maximally automorphic, as one can see from the preceding proof). In particular, the defining relations give true orders of the generating elements. We remark that one may employ an alternative approach to verify the existence of such an extension, see \cite[Chapter 9]{Sims1994}.

\begin{step}[2] Proof that $G$ is maximally automorphic.\par
Assume that $(x_1,y_1)$ is an arbitrary generating pair of $G$. Then $x_1$ and $y_1$ can be written as the form $x_1=x^iy^jz^ku^mv^n$ and $y_1=x^ry^sz^tu^hv^f$, where $i,j,k,m,n,r,s,t,h,f$ are integers. By Burnside's Basis Theorem we have $si-rj\not\equiv0\pmod{p}$. We need to show that the generating pair $(x_1,y_1)$ fulfils all the defining relations, so by von Dyck's Theorem \cite{LS1977} the assignment $x\mapsto x_1, y\mapsto y_1$ extends to an epimorphism from $G$ onto itself, which therefore must be an automorphism of $G$. We will demonstrate this for Case~(i) of Theorem~\ref{CLASSTHREE}, and leave the verification for other cases to the reader.

 Denote $z_1=[x_1,y_1]$, $u_1=[z_1,x_1]$ and $v_1=[z_1,y_1]$. Bearing formulae in Remark~\ref{MARK} in mind we do the following calculation:
\begin{align*}
z_1&=[x^iy^jz^ku^mv^n,x^ry^sz^tu^hv^f]=[x^iy^jz^k,x^ry^sz^t]\\
&=[x^iy^j,x^ry^s][x^iy^j,z^t][z^k,x^ry^s]\\
&=[x^i,x^ry^s]^{y^j}[y^j,x^ry^s][x^iy^j,z]^t[z,x^ry^s]^k\\
&=[x^i,y^s]^{y^j}[y^j,x^r]^{y^s}[x,z]^{it}[y,z]^{jt}[z,x]^{rk}[z,y]^{sk}\\
&=(z^{is}u^{s{i\choose2}}v^{i{s\choose2}})^{y^j}(z^{-jr}u^{-j{r\choose2}}v^{-r{j\choose2}})^{y^s}u^{kr-it}v^{ks-jt}\\
&=z^{is-jr}u^{kr-it+s{i\choose2}-j{r\choose2}}v^{ijs-jrs+ks-jt+i{s\choose2}-r{s\choose2}};\\
u_1&=[z^{is-jr}u^{kr-it+s{i\choose2}-j{r\choose2}}v^{ijs-jrs+ks-jt+i{s\choose2}-r{s\choose2}},x^iy^jz^ku^mv^n]\\
&=[z^{is-jr},x^iy^j]=[z,x^iy^j]^{is-jr}=([z,x^i][z,y^j])^{is-jr}\\
&=u^{i(is-jr)}v^{j(is-jr)};\\
v_1&=[z^{is-jr}u^{kr-it+s{i\choose2}-j{r\choose2}}v^{ijs-jrs+ks-jt+i{s\choose2}-r{s\choose2}},x^ry^sz^tu^hv^f]\\
&=[z^{is-jr},x^ry^s]=[z,x^ry^s]^{is-jr}=[z,x^r]^{is-jr}[z,y^s]^{is-jr}\\
&=u^{r(is-jr)}v^{s(is-jr)}.
\end{align*}
It is clear that $z_1^{3^b}=u_1^{3c}=v_1^{3^c}=[x_1,u_1]=[x_1,v_1]=[y_1,u_1]=[y_1,v_1]=1$. It remains to show $x_1^{3^a}=y_1^{3^a}=1$. Note that $\exp(G_2)=3^b$ and $\exp(G_3)=3^c$ where either $c<b=a$ or $c\leq b\leq a-1$. By formula~\eqref{PRT} we have
\begin{align*}
x_1^{3^a}&=(x^iy^jz^ku^mv^n)^{3^a}=(x^iy^jz^k)^{3^a}=(x^iy^j)^{3^a}[x^iy^j,z^{-k}]^{3^a\choose2}z^{k3^a}\\
&=x^{i3^a}[x^i,y^{-j}]^{3^a\choose2}[x^i,y^{-j},x^i]^{3^a\choose3}
[x^i,y^{-j},y^{-j}]^{3^a\choose3}y^{j3^a}u^{ki{3^a\choose2}}v^{kj{3^a\choose2}}=1.
\end{align*}
Similarly $y_1^{3^a}=1$ (We indeed have $\exp(G)=3^a$). Therefore $(x_1,y_1)$ fulfils all defining relations of the group in (i), as required.
\end{step}

\begin{step}[3] Determination of the isomorphism relation.\par
In Table~\ref{TAB2} we summarize the isomorphism classes of $G_3$, $G'$ and $G^{\mathrm{ab}} =G/G'$ for $G$ from each of the six families.
\begin{center}
\begin{threeparttable}[b]
\caption{Invariant Types of $G_3$, $G'$ and $G^{\mathrm{ab}}$}\label{TAB2}
\begin{tabular*}{104mm}[c]{|p{10mm}|p{10mm}|p{20mm}|p{10mm}|p{32mm}|}
\toprule
Case                   &$G_3$     & $G'$         &$G^{\mathrm{ab}}$ & Condition\\
\hline
(i)&$\mathrm{C}_{3^c}^2$  &$\mathrm{C}_{3^c}^2\times \mathrm{C}_{3^b}$ & $\mathrm{C}_{3^a}^2$& $1\leq c<b=a$ or $1\leq c\leq b\leq a-1$\\
\hline
(ii)&$\mathrm{C}_{p^c}^2$ &$\mathrm{C}_{p^c}^2\times \mathrm{C}_{p^b}$ & $\mathrm{C}_{p^a}^2$&$1\leq c\leq b\leq a$\\
\hline
(iii)&$\mathrm{C}_{2^c}^2$&$\mathrm{C}_{2^c}^2\times \mathrm{C}_{2^b}$ & $\mathrm{C}_{2^a}^2$&$1\leq c\leq b\leq a-1$\\
\hline
 (iv)& $\mathrm{C}_{2^c}^2$ &$\mathrm{C}_{2^c}^2\times \mathrm{C}_{2^b}$ & $\mathrm{C}_{2^{a}}^2$&$1\leq c\leq b\leq a-1$\\
  \hline
(v) &$\mathrm{C}_{2^c}^2$ &$\mathrm{C}_{2^c}^2\times \mathrm{C}_{2^{a-1}}$ & $\mathrm{C}_{2^{a-1}}^2$&$1\leq c\leq a-2$\\
\hline
 (vi)&$\mathrm{C}_{2^c}^2$&$\mathrm{C}_{2^c}^2\times \mathrm{C}_{2^{a-1}}$ & $\mathrm{C}_{2^{a-1}}^2$&$1\leq c\leq a-2$ \\
\bottomrule
\end{tabular*}
\end{threeparttable}
\end{center}
\end{step}
From the table it is easily seen that the groups are pairwise non-isomorphic, except possibly groups $A$ and $B$ from (v) and (vi), respectively, with $A'\cong B'$ and $A^{\mathrm{ab}}\cong B^{\mathrm{ab}}$. Let $A=\langle x,y\rangle$ and $B=\langle x',y'\rangle$, where the generating pairs satisfy the defining relations in (v) and (vi), respectively. If $A\cong B$ then there is a generating pair $(x_1,y_1)$ of $A$ such that the mapping $\alpha:x'\mapsto x_1,y'\mapsto y_1$ is an isomorphism from $B$ to $A$. Since $A$ is maximally automorphic, the mapping $\beta: x_1\mapsto x, y_1\mapsto y$ is an automorphism of $A$, so the composition $\gamma=\alpha\beta:x'\mapsto x, y'\mapsto y$ is an isomorphism from $B$ to $A$. We have $z^{2^{a-2}}=x^{2^{a-1}}=\gamma({x'}^{2^{a-1}})=\gamma({z'}^{2^{a-2}}{u'}^{2^{c-1}})=z^{2^{a-2}}u^{2^{c-1}},$ so $u^{2^{c-1}}=1$. This is a contradiction since $o(u)=2^c$. Therefore $A\not\cong B$.\qed
\medskip

By Corollary~\ref{DESSIN} each of the maximally automorphic $p$-groups of class three given by Theorem~\ref{CLASSTHREE} determines a unique regular dessin. Their types and genera are summarized in Table~\ref{TAB3}.
\begin{center}
\begin{threeparttable}[b]
\caption{Uniquely Regular Dessins of Class Three}\label{TAB3}
\begin{tabular*}{122mm}[c]{|p{10mm}|p{20mm}|p{30mm}|p{45mm}|}
\toprule
{\rm Case}                     &$|G|$   & {\rm Type}       & {\rm Genus}\\
\hline
{\rm(i)} &$3^{2(a+c)+b}$ & $(3^a,3^a,3^a) $     &$3^{a+b+2c+1}(3^{a-1}-1)/2+1$\\
\hline
{\rm(ii)} &$p^{2(a+c)+b}$ & $(p^a,p^a,p^a) $     &$p^{a+b+2c}(p^a-3)/2+1$\\
\hline
{\rm(iii)}&$2^{2(a+c)+b}$& $(2^a,2^a,2^a) $     &$2^{a+b+2c-1}(2^a-3)+1$\\
\hline
{\rm(iv)}&$2^{2(a+c)+b}$& $(2^{a+1},2^{a+1},2^{a+1}) $     &$2^{a+b+2c-2}(2^{a+1}-3)+1$\\
 \hline
{\rm(v)} &$2^{3a+2c-3}$& $(2^a,2^a,2^a)$      &$2^{2a+2c-4}(2^a-3)+1$\\
 \hline
{\rm(vi)}&$2^{3a+2c-3} $& $(2^a,2^a,2^a) $     &$2^{2a+2c-4}(2^a-3)+1$\\
\bottomrule
\end{tabular*}
\end{threeparttable}
\end{center}

%%%%%%%%%%%%%%%%%%%%%%%%%%%  Acknowlegement  %%%%%%%%
\section*{Acknowledgement}
The authors would like to thank the anonymous referees for helpful comments and suggestions which have improved the content and the presentation of the paper. The first author is supported by Natural Science Foundation of Zhejiang Province (No.~LQ17A010003). The second author is supported by the grants APVV-15-0220, VEGA 1/0150/14, Project LO1506 of the Czech Ministry of Education, Youth and Sports and Project P202/12/G061 of Czech Science Foundation. The third author is supported by Natural Science Foundation of Zhejiang Province (No.~LY16A010010).

%%%%%%%%%%%%%%%%%%%%%%%%%%%%%%%%%%%%%%%%%%%%%%%%%%%%%
%%%%%%%%%%%%%%%% References %%%%%%%%%%%%%%%%%%%%%%%%%%%
%%%%%%%%%%%%%%%%%%%%%%%%%%%%%%%%%%%%%%%%%%%%%%%%

\end{document}